\numberwithin{equation}{section}
\begin{document}
\newtheorem{theorem}{Theorem}[section]
\newtheorem{lemma}{Lemma}[section]
\newtheorem{remark}{Remark}[section]
\newtheorem{proposition}{Proposition}[section]

\newcommand{\bbR}{\mathbb{R}}
\newcommand{\bbC}{\mathbb{C}}
\newcommand{\bbZ}{\mathbb{Z}}

\def\la{\langle}
\def\ra{\rangle}
\def\fac{{\rm !}}

\def\x{\mathbf{x}}
\def\z{\mathbf{x}}
\def\y{\mathbf{y}}
\def\p{\mathbf{p}}
\def\P{\mathbf{P}}
\def\S{\mathbf{S}}
\def\h{\mathbf{h}}
\def\m{\mathbf{m}}
\def\y{\mathbf{y}}
\def\bz{\mathbf{z}}
\def\F{\mathcal{F}}
\def\R{\mathbb{R}}
\def\T{\mathbf{T}}
\def\N{\mathbb{N}}
\def\D{\mathbf{D}}
\def\V{\mathbf{V}}
\def\U{\mathbf{U}}
\def\K{\mathbf{K}}
\def\Q{\mathbf{Q}}
\def\W{\mathbf{W}}
\def\M{\mathbf{M}}
\def\oM{\overline{\mathbf{M}}}

\def\C{\mathbb{C}}
\def\Z{\mathbb{Z}}
\def\bZ{\mathbf{Z}}
\def\H{\mathcal{H}}
\def\A{\mathbf{A}}
\def\V{\mathbf{V}}

\def\B{\mathbf{B}}
\def\c{\mathbf{C}}
\def\L{\mathcal{L}}
\def\bS{\mathbf{S}}
\def\H{\mathcal{H}}
\def\I{\mathbf{I}}
\def\Y{\mathbf{Y}}
\def\X{\mathbf{X}}
\def\G{\mathbf{G}}
\def\f{\mathbf{f}}
\def\z{\mathbf{z}}
\def\bv{\mathbf{v}}
\def\y{\mathbf{y}}
\def\d{\hat{d}}
\def\x{\mathbf{x}}
\def\bI{\mathbf{I}}

\def\g{\mathbf{g}}
\def\w{\mathbf{w}}
\def\b{\mathbf{b}}
\def\a{\mathbf{a}}
\def\u{\mathbf{u}}
\def\v{\mathbf{v}}
\def\q{\mathbf{q}}
\def\e{\mathbf{e}}
\def\s{\mathcal{S}}
\def\cc{\mathcal{C}}
\def\c{\mathbf{c}}
\def\g{\mathbf{g}}

\def\tg{\tilde{g}}
\def\tx{\tilde{\x}}
\def\tg{\tilde{g}}
\def\tA{\tilde{\A}}

\def\cX{\overline{\mathbf{X}}}
\def\bell{\boldsymbol{\ell}}
\def\bxi{\boldsymbol{\xi}}
\def\balpha{\boldsymbol{\alpha}}
\def\bbeta{\boldsymbol{\beta}}
\def\bgamma{\boldsymbol{\gamma}}
\def\eeta{\boldsymbol{\eta}}
\def\bpsi{\boldsymbol{\psi}}
\def\btau{\boldsymbol{\tau}}
\def\supmu{{\rm supp}\,\mu}
\def\supp{{\rm supp}\,}
\def\cd{\mathcal{C}_d}
\def\cok{\mathcal{C}_{\K}}
\def\vol{{\rm vol}\,}
\def\om{\mathbf{\Omega}}

\def\blambda{\boldsymbol{\lambda}}
\def\btheta{\boldsymbol{\theta}}
\def\bphi{\boldsymbol{\phi}}
\def\bpsi{\boldsymbol{\psi}}
\def\bnu{\boldsymbol{\nu}}
\def\bmu{\boldsymbol{\mu}}
\def\bom{\boldsymbol{S}}
\def\tM{\hat{\M}}
\def\tv{\hat{\v}}

\title[Equilibrium vs Lebesgue measure]{Chebyshev and equilibrium measure vs Bernstein and Lebesgue measure}

\author{Jean B. Lasserre}

\address{LAAS-CNRS and Institute of Mathematics, BP 54200, 7 Avenue du Colonel Roche, 31031 Toulouse c\'edex 4, France}

\email{lasserre@laas.fr}

\thanks{J.B. Lasserre is supported by the AI Interdisciplinary Institute ANITI  funding through the french program
``Investing for the Future PI3A" under the grant agreement number ANR-19-PI3A-0004. This research is also part of the programme DesCartes and is supported by the National Research Foundation, Prime Minister's Office, Singapore under its Campus for Research Excellence and Technological Enterprise (CREATE) programme.}
\subjclass{42C05 33C47 90C23 90C46 94A17 41A99}

\begin{abstract}
We show that Bernstein polynomials are related to
the Lebesgue measure on $[0,1]$ in a manner similar as Chebyshev polynomials are 
related to the equilibrium measure $dx/\pi\sqrt{1-x^2}$ of $[-1,1]$. 
We also show that Pell's polynomial equation satisfied by  Chebyshev polynomials, provides a partition of unity of $[-1,1]$, the  analogue of 
the partition of unity of $[0,1]$ provided by Bernstein polynomials.
Both partitions of unity
are interpreted as a specific 
algebraic \emph{certificate} that the constant polynomial $``1"$ is positive -- on $[-1,1]$ via Putinar's certificate of positivity (for Chebyshev), and -- on $[0,1]$ via Handeman's certificate of positivity (for Bernstein).
Then in a second step, one combines this partition of unity 
with an interpretation of a duality result of Nesterov in convex conic optimization 
to obtain an explicit connection with the equilibrium measure on $[-1,1]$ (for Chebyshev) and 
Lebesgue measure on $[0,1]$ (for Bernstein). Finally this connection is also partially established for 
the simplex in $\R^d$. 
 \end{abstract}

\maketitle

\section{Introduction}

In a recent contribution \cite{cras-3} we have considered some specific sets $S\subset\R^d$ like the unit box $[-1,1]^d$,
the Euclidean unit ball and the canonical simplex of $\R^d$, and established (in the author's opinion) surprising
connections between 
the Christoffel function of their associated equilibrium measure, the polynomial Pell's equation,
and a Putinar's certificate of positivity on $S$ for the constant polynomial $``1"$. 

The notion of equilibrium measure associated to a given set,
originates from logarithmic potential theory (working in
$\mathbb{C}$ in the univariate case) to minimize some energy functional.
For instance, the equilibrium (Chebsyshev) measure $d\phi:=dx/\pi\sqrt{1-x^2}$ 
minimizes the Riesz $s$-energy functional 
\[\int \int \frac{1}{\vert x-y\vert^s}\,d\mu(x)\,d\mu(y)\,\]
with $s=2$, among all measures $\mu$ equivalent to $\phi$.
Some generalizations have been obtained in the multivariate case 
via pluripotential theory in $\mathbb{C}^n$. 
In particular, if $S\subset\R^n\subset\mathbb{C}^n$ is compact then 
its equilibrium measure is equivalent to
Lebesgue measure on compact subsets of $\mathrm{int}(S)$; see e.g. \cite{bedford}. For the interested reader, some examples of equilibrium measures can be found in e.g.
\cite{dunkl,book}.

For illustration and ease of exposition, consider the
prototypal example of the univariate unit box $[-1,1]$ and its associated equilibrium measure $d\phi=dx/\pi\sqrt{1-x^2}$. 
Starting from the polynomial Pell's equation\footnote{A multivariate polynomial $F\in\Z[\x]$ is called a multi-variable Fermat-Pell polynomial if there exist polynomials $C,H\in\Z[\x]$ such that $C^2-F\,H^2=1$ or $C^2-F\,H^2=-1$ 
for all $\x$. Then the triple $(C,H,F)$ is a multi-variable solution to Pell's equation; see e.g. \cite{pell-2}.}
\begin{equation}
\label{eq:pell}
T_n(x)^2+(1-x^2)\,U_{n-1}(x)^2\,=\,1\,,\quad\forall x\in\R\,,\end{equation}
satisfied by the Chebyshev polynomials $(T_n)_{n\in\N}$ of the first kind 
and $(U_n)_{n\in\N}$ of second kind,  one easily obtains that
\begin{equation}
\label{unity-1}
1\,=\,\sum_{j=0}^nT_j(x)^2/(n+1)+(1-x^2)\,\sum_{i=0}^{n-1}U_{i}(x)^2/(n+1)\,,\quad\forall x\in\R\,.
\end{equation}
Interestingly, \eqref{unity-1} is a sum-of-squares (SOS)-based Putinar's certificate 
\begin{equation}
\label{eq:put}
1\,=\,\sigma_0(x)+(1-x^2)\,\sigma_1(x)\,,\quad \forall x\in\R\,,\end{equation}
that the constant polynomial $``1"$ is positive on $[-1,1]$.
As shown in \cite{cras-1,cras-3}, among all such representations \eqref{eq:put}, the particular form \eqref{unity-1} maximizes an entropy related functional of the Gram matrices of the SOS weights $\sigma_0$ and $\sigma_1$ in \eqref{eq:put}.
On the other hand, \eqref{eq:pell} is nothing less that the Markov-Luk\'acs representation of the constant polynomial 
$``1"$ into a weighted sum of \emph{only two squares}, that is, a representation of the form \eqref{eq:put} 
with single squares instead of sum-of-squares.

Finally, with $x\mapsto g(x):=1-x^2$, and after a rescaling of $T_j$ to $\widehat{T}_j=T_j/\sqrt{2}$ (resp. 
$\widehat{U}_j:=U_j/\sqrt{2}$) so as to obtain a family of polynomials that are \emph{orthonormal} w.r.t. $\phi$
(resp. w.r.t. $g\cdot\phi$ where $g\cdot\phi$ is the measure $gd\phi$), 
\begin{eqnarray}
\label{unity-2}
2n+1&=&\sum_{j=0}^n\widehat{T}_j^2+g\,\sum_{i=0}^{n-1}\widehat{U}_{i}^2\\
\label{unity-3}
&=&\Lambda^{\phi}_n(x)^{-1}+g(x)\,\Lambda^{g\cdot\phi}_n(x)^{-1}\,,\quad\forall x\in\R\,,
\end{eqnarray}
where $\Lambda^\phi_n$ (resp. $\Lambda^{g\cdot\phi}_n$ ) is the degree-$n$ Christoffel function associated with $\phi$ (resp. $g\cdot\phi$); see \cite{cras-3}.
Notice that in \eqref{unity-1}, the polynomials 
\[\left\{(T_j/(n+1))_{j\leq n}\,,\,g\,U_j/(n+1))_{j\leq n-1}\right\}\]
or in \eqref{unity-2}, the polynomials 
\[\left\{(\widehat{T_j}/(2n+1))_{j\leq n}\,,\,g\,\widehat{U}_j/(2n+1))_{j\leq n-1}\right\}\,,\]
form a partition of unity of the interval $[-1,1]$. This partition of unity \eqref{unity-1} (or \eqref{unity-2}) is related \emph{explicitly} to the equilibrium measure $\phi$ of $[-1,1]$ by the interpretation \eqref{unity-3} of \eqref{unity-2} (and/or the orthogonality w.r.t. $\phi$).

\subsection*{Contribution} Inspired by the partition of unity \eqref{unity-1}, we now consider 
another well-known partition of unity, namely:
\begin{equation}
\label{bern-1}
1\,=\,\sum_{j=0}^nB_{n,j}(x)\,,\quad \forall x\in\R\,,\quad\forall n\in\N\,,\end{equation}
of the interval $[0,1]$, provided by the Bernstein polynomials
$(B_{n,j})_{j\leq n}$, where $x\mapsto B_{n,j}(x):={n\choose j}x^j\,(1-x)^{n-j}$, for all $j=0,\ldots,n$.  In particular,
using that $\int_0^1B_{n,j}(x)dx=1/(n+1)$ for all $j=0,\ldots,n$, and summing up, yields 

\begin{eqnarray}
\label{bern-2}
1&=&\frac{2}{(n+1)(n+2)}\,\sum_{t=0}^n\sum_{j=0}^t\frac{B_{t,j}(x)}{\int_0^1B_{t,j}(x)dx}
\,,\quad\forall x\in\R\,,\\
\label{bern-3}
&=&\frac{2}{(n+1)(n+2)}\,\sum_{(i,j)\in\N^2_n}c^*_{ij}x^i \,(1-x)^j\,,\quad\forall x\in\R\,,
\end{eqnarray}
with $1/c^*_{ij}=\frac{i\mathrm{!}\,j\mathrm{!}}{(i+j+1)\mathrm{!}}\,=\,\int_0^1 x^i (1-x)^jdx$.

We want to convince the reader that \eqref{bern-2} (or \eqref{bern-3}) is the analogue for Bernstein polynomials on $S=[0,1]$, of \eqref{unity-2}-\eqref{unity-3} for
Chebyshev polynomials on $S=[-1,1]$. Indeed :

$\bullet$ In \eqref{bern-2}, $(n+1)(n+2)/2$ is the number of terms $x^i (1-x)^j$, 
exactly as $2n+1$ is also the number of terms $\widehat{T}_{j}(x)^2$ and $(1-x^2)\widehat{U}_{j}(x)^2$ in the right-hand-side of \eqref{unity-2}. So in both cases, the polynomial $``1"$ is expressed as an \emph{average} of a certain number of polynomials that are positive on $S$.

$\bullet$ The coefficient $c^*_{ij}$ associated with $x^i(1-x)^j$ (equivalently to $B_{i+j,i}$) is just $1/\int_0^1x^i(1-x)^jdx$
(integration w.r.t. Lebesgue measure on $[0,1]$), 
exactly as $1$ is the coefficient associated with $\widehat{T}_j^2$ and $(1-x^2)\widehat{U}_j^2$, and satisfies
\[1\,=\,\int_{-1}^1\widehat{T}_j(x)^2d\phi\,;\quad
1\,=\,\int_{-1}^1(1-x^2)\widehat{U}_j(x)^2d\phi\,=\,\int_{-1}^1\widehat{U}_j(x)^2\sqrt{1-x^2}dx/\pi\,\]
(integration w.r.t. equilibrium measure $\phi$ on $[-1,1]$).

$\bullet$ The vector of coefficients $\c^*=(c^*_{ij})_{(i,j)\in\N^2_n}$ is the unique optimal solution of the  ``max-entropy" optimization problem:
\[\sup_{\c\geq0}\,\{\,\sum_{(i,j)\in\N^2_n}\log(c_{ij}):\: 1\,=\,\frac{2}{(n+1)(n+2)}\,
\sum_{(i,j)\in\N^2_n}c_{ij}x^i \,(1-x)^j\,,\quad \forall x\in\R\,\}\,.\]
Similarly, with $\v_n(x):=(x^j)_{j\leq n}\in\R[x]^{n+1}$ (and $x\mapsto g(x):=1-x^2$),
\begin{eqnarray*}
x\mapsto \displaystyle\sum_{j=0}^n\widehat{T}_j^2(x)&=&\v_n(x)^T\M_n(\phi)^{-1}\v_n(x)\,,\quad\forall x\,\in\R\\
x\mapsto \displaystyle\sum_{j=0}^{n-1}\widehat{U}_j^2(x)&=&\v_{n-1}(x)^T\M_n(g\cdot\phi)^{-1}\v_{n-1}(x)\,,\quad\forall x\,\in\R,
\end{eqnarray*}
the couple of Gram matrices $(\A^*,\B^*):=(\M_n(\phi)^{-1}, \M_{n-1}(g\cdot\,\phi)^{-1})$
is the unique optimal solution of
the max-entropy optimization problem:
\[\begin{array}{rl}
\displaystyle\sup_{\A,\B\succeq0}&\{\,\log\mathrm{det}(\A)+\log\mathrm{det}(\B):\\
\mbox{s.t.}& 1\,=\,\frac{1}{2n+1}\,[\,\underbrace{\v_n(x)^T\A\,\v_n(x)}_{\sigma_0(x)}\\
&+
(1-x^2)\,\underbrace{\v_{n-1}(x)^T\B\,\v_{n-1}(x)}_{\sigma_1(x)}\,]\,,\quad\forall x\,\in\,\R\,\}\,\end{array}\,.\]
So the partition of unity \eqref{unity-1} associated with Chebyshev polynomials 
is associated with Putinar's certificate of positivity \eqref{eq:put} on $[-1,1]$, based on SOS polynomials, and applied to the constant polynomial $``1"$, whereas
the partition of unity \eqref{bern-2} associated with Bernstein polynomials 
is associated with Handelman's certificate of positivity on $[0,1]$, based on nonnegative coefficients $c_{ij}$
of $x^i (1-x)^j$.  But both share the same variational property, namely their coefficients
in their respective certificate maximize a similar entropy criterion.
\section{Notation definitions and a duality result}

\subsection{Notation and definitions}
Let $\R[x]$ be the ring of univariate polynomials, $\R[x]_t\subset\R[x]$ be the 
space of polynomials of degree at most $t$, and
$\Sigma[x]_t\subset\R[x]_{2t}$ be the convex cone 
of univariate sum-of-squares (SOS) polynomials of degree at most $2t$. An element $p\in\R[x]_t$
is written as $x\mapsto p(x)=\p^T\v_t(x)$ where $\v_t(x)=(x^j)_{0\leq j\leq t}$ is the usual monomial basis of $\R[x]_t$,
and $\p\in\R^{t+1}$ is the vector of coefficients of $p$ in that basis.
An element $\phi\in\R[x]^*_t$ is represented by
a vector $\bphi\,=\,(\phi_j)_{0\leq j\leq t}$, that is, $\phi(p)\,=\,\bphi^T\p$.

Given a polynomial $g\in\R[x]$
and a 
linear functional $\phi\in\R[x]_t^*$ with associated 
sequence $\bphi\in\R^{t+1}$, define the new linear functional $g\cdot\phi\in\R[\x]^*_t$
(with associated sequence $g\cdot\bphi$) defined by:
\[p\mapsto g\cdot\phi(p)\,=\,\phi(g\,p)\,=\,\langle g\cdot\bphi,\p\rangle\,,\quad \forall p\in\R[x]_t\,.\]

Given a sequence $\bphi\in\R[x]_t$ denote by $\M_t(\bphi)$ (or $\M_t(\phi)$), the \emph{moment} matrix
associated with $\bphi$. It  is the $(t+1)\times (t+1)$ real  symmetric Hankel matrix with  entries
\[\M_t(\bphi)[i,j]\,=\,\phi(x^{i+j-2})\,=\,\phi_{i+j-2}\,,\quad\forall 1\leq i,j\leq t+1\,.\]
Similarly, the matrix $\M_t(g\cdot\bphi)$ (or $\M_t(g\cdot\phi)$), i.e., the moment matrix associated with the sequence
$g\cdot\bphi$, is also called the \emph{localizing} matrix associated with $\bphi$ and $g$.

With $x\mapsto g(x):=(1-x^2)$, introduce the convex cone $Q_n(g)\subset\R[x]_{2n}$ 
defined by
\[Q_n(g)\,:=\,\{\,\sigma_0+\sigma_1\,g\::\: \sigma_0\in\Sigma[x]_n\,,\:\sigma_1\in\Sigma[x]_{n-1}\,\}\,.\]
Its dual $Q_n(g)^*\subset\R[x]_{2n}^*$ is the convex cone defined by
\[Q_n(g)^*\,:=\,\{\,\bphi\in\R^{2n+1}:\M_t(\bphi)\succeq0\,;\:
\M_{n-1}(g\cdot\bphi) \succeq0\,\}\,.\]
In the terminology of real algebraic geometry, $Q_n(g)$
is the \emph{quadratic module} associated with
the polynomial $g$. 

\subsection{Two certificates of positivity}
We next introduce two (celebrated) certificates of positivity on  $[1,1]$ and $[0,1]$ respectively.

\begin{theorem}[Markov-Luk\'acs \& Putinar]
\label{th1}
If $p\in\R[x]_{2n}$ is nonnegative on $[-1,1]$ then $p\in Q_n(g)$, i.e.,
\begin{equation}
 \label{th1-1}
 p\,=\,\sigma_0+\sigma_1\,g\,,
 \end{equation}
 for some SOS polynomials $\sigma_0\in\Sigma[x]_n$ and $\sigma_1\in\Sigma[x]_{n-1}$. In fact,
 we even have
\begin{equation}
 \label{th1-11}
 p\,=\,p_0^2+p_1^2\,g\,,
 \end{equation}
 for some polynomials $p_0\in\R[x]_n$ and $p_1\in\R[x]_{n-1}$.
\end{theorem}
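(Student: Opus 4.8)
The plan is to prove the stronger statement \eqref{th1-11} directly, since \eqref{th1-1} follows by taking $\sigma_0 = p_0^2 \in \Sigma[x]_n$ and $\sigma_1 = p_1^2 \in \Sigma[x]_{n-1}$. The argument proceeds by factoring $p$ over $\bbC$ according to the location and multiplicity of its roots, grouping the factors cleverly, and then invoking a classical two-squares identity. First I would record that a polynomial $p \in \R[x]_{2n}$ nonnegative on $[-1,1]$ has the following root structure: its real roots inside $(-1,1)$ occur with even multiplicity (otherwise $p$ changes sign), its complex roots come in conjugate pairs, and the only roots of odd multiplicity lie in $\R \setminus (-1,1)$, i.e. in $(-\infty,-1] \cup [1,\infty)$. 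Each linear factor $(x - a)$ with $a \geq 1$ can be written as $-( 1-x^2)\cdot\tfrac{1}{a+x}$-type manipulation is not polynomial, so instead one uses the sign: on $[-1,1]$ one has $a - x \geq 0$ when $a \geq 1$ and $x - b \geq 0$ when $b \leq -1$.

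The key algebraic step is the \emph{Markov–Lukács identity}: any polynomial that is a product of an even power times factors $(a-x)$ with $a\geq 1$ and factors $(x-b)$ with $b\leq -1$ can be rearranged, using the elementary identities
\[
(a-x) = \tfrac{1}{2}\bigl[(a-1)(1+x) + (a+1)(1-x)\bigr], \qquad
(x-b) = \tfrac{1}{2}\bigl[(1+x)(1-b)\cdot\!\cdots\bigr],
\]
together with the crucial product formula
\[
(1-x)\,(1+x) \;=\; 1 - x^2 \;=\; g(x),
\]
so that any product of an \emph{even} number of such "boundary" linear factors is a perfect square up to the factor $g$, and any product of an \emph{odd} number contributes exactly one extra factor $g$. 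Concretely, I would split the boundary factors into those vanishing at $\geq 1$ and those vanishing at $\leq -1$, and use the two-square Cauchy–Schwarz-type identity
\[
(A_0^2 + g\,A_1^2)(B_0^2 + g\,B_1^2) \;=\; (A_0 B_0 + g\,A_1 B_1)^2 + g\,(A_0 B_1 - A_1 B_0)^2,
\]
which shows that the set of polynomials representable as $p_0^2 + g\,p_1^2$ is closed under multiplication. Since each irreducible factor of $p$ (a squared linear or squared quadratic, a single $(a-x)$ with $a\geq1$ written as $(a-x) = \tfrac{a-1}{2}(1+x) + \tfrac{a+1}{2}(1-x)$ hence of the form $p_0^2 + g p_1^2$ after noting $(1\pm x)$ itself equals $\bigl(\sqrt{\tfrac{1\pm x}{2}}\bigr)^2\cdot 2$... ) this needs care: the building blocks are $(1-x) = \tfrac14(1-x)^2\cdot\tfrac{4}{1-x}$ — not polynomial. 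The correct building blocks are: $1-x^2 = g = 0^2 + g\cdot 1^2$; $(1-x)^2$ and $(1+x)^2$ are squares; and $(1-x)(1+x)=g$. So a single factor $(1-x)$ is handled only when paired. I would therefore pair up the boundary roots and treat the possibly-one leftover factor by absorbing it with $g$ via $1-x^2 = (1-x)(1+x)$, checking the leftover is compatible with nonnegativity on $[-1,1]$.

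I expect the main obstacle to be the careful bookkeeping of parities: showing that the total number of boundary-type linear factors (counted with multiplicity, across both endpoints) can always be arranged so that the "excess" over a perfect square is \emph{exactly one} factor of $g = 1-x^2$, never $(1-x)$ or $(1+x)$ alone, and that the resulting degrees respect $p_0 \in \R[x]_n$, $p_1 \in \R[x]_{n-1}$. The degree count is the delicate part: if $p$ has degree $2n$ and we write $p = p_0^2 + g\,p_1^2$ with $\deg g = 2$, then $\deg p_0 \leq n$ forces $\deg(g\,p_1^2) \leq 2n$, i.e. $\deg p_1 \leq n-1$, which matches — but one must verify the leading coefficients do not conspire to drop the degree of $p_0$ below what is needed, using that the leading coefficient of $p$ is positive. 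An alternative, cleaner route I would fall back on is the classical Lukács theorem proof via the substitution $x = \cos\theta$ (reducing to nonnegative trigonometric polynomials and the Fejér–Riesz lemma) or via a direct induction on $n$ peeling off one boundary root at a time using the product identity above; I would likely present the Fejér–Riesz route as it makes the two-squares structure transparent and the degree bookkeeping automatic.
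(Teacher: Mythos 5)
The paper does not actually prove Theorem \ref{th1}: it is quoted as the classical Markov--Luk\'acs theorem (Putinar's Positivstellensatz being cited only as its multivariate descendant), so there is no in-paper argument to compare against. Judged on its own, your plan has the right skeleton --- the root-structure analysis, the multiplicativity of the set $\{p_0^2+g\,p_1^2\}$ under the Brahmagupta-type two-squares identity, and the Fej\'er--Riesz fallback --- but the primary route has two concrete gaps. First, you never actually exhibit a single boundary factor as an element of $\{p_0^2+g\,p_1^2\}$: the identities you write for $(a-x)$ are Handelman-type nonnegative combinations of $1\pm x$, not two-square representations, and your plan to ``pair up'' the boundary roots cannot work in general, since a polynomial nonnegative on $[-1,1]$ may have an odd total number of such factors (e.g.\ $p(x)=1-x$). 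The missing ingredient is elementary but essential: for every $a\geq 1$,
\[
a-x\,=\,\Bigl(\beta x-\tfrac{1}{2\beta}\Bigr)^2+(1-x^2)\,\beta^2,\qquad \beta^2=\tfrac{a+\sqrt{a^2-1}}{2}
\]
(the equation $\beta^4-a\beta^2+\tfrac14=0$ has a real root precisely because $a\geq1$), and symmetrically for $(x-b)$ with $b\leq-1$; positive-definite quadratic factors and interior even-order factors are handled similarly, so with these building blocks the multiplicative argument closes.

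Second, the degree bookkeeping is not merely ``delicate'': it fails if you apply the product identity blindly. Multiplying the representations of $a-x$ and $c-x$ ($a,c\geq1$), each with $\deg A_0=\deg B_0=1$, the term $A_0B_0+gA_1B_1$ generically has degree $2$, overshooting the bound $\deg p_0\leq1$ required for their degree-$2$ product. The fix is that the product admits \emph{two} representations,
\[
(A_0B_0+g\,A_1B_1)^2+g\,(A_0B_1-A_1B_0)^2\,=\,(A_0B_0-g\,A_1B_1)^2+g\,(A_0B_1+A_1B_0)^2,
\]
and at each step one chooses the sign for which the leading coefficients cancel, then checks by induction that $\deg P_0\leq n$, $\deg P_1\leq n-1$ are preserved. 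If you prefer to avoid this, your fallback is the right call: with $x=\cos\theta$, write the nonnegative trigonometric polynomial $p(\cos\theta)$ as $\vert h(e^{i\theta})\vert^2$ by Fej\'er--Riesz with $h$ self-inversive, and split $e^{-in\theta}h(e^{i\theta})$ into real and imaginary parts to get $p=p_0^2+(1-x^2)p_1^2$ with the degree bounds automatic. Either route can be completed, but as written the proposal is a plan with the two holes above, not yet a proof.
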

So the refinement \eqref{th1-11} of \eqref{th1-1} is Markov-Luk\'acs' theorem
which states that 
one may even decompose $p$ as a (weighted) sum of only two \emph{single} squares. Putinar's Positivstellensatz \cite{putinar} is a multivariate generalization of Theorem \ref{th1} for 
polynomials that are strictly positive on a compact basic semi-algebraic set (whose generators satisfy an Archimedean property).

\begin{theorem}[Bernstein \cite{bernstein}]
\label{th2}
If $p\in\R[x]_{n}$ is (strictly) positive on $[0,1]$ then there exists $m\geq n$ and $0\leq \c=(c_{ij})_{i+j=m}$ such that
\begin{equation}
 \label{th2-1}
 p(x)\,=\,\sum_{i+j=m}c_{ij}\,x^i \,(1-x)^j\,,\quad\forall x\in\R\,.
\end{equation}
\end{theorem}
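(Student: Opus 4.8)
The plan is to prove the statement directly in the Bernstein basis. For every integer $m\ge n$ the polynomials $\{B_{m,j}\}_{j=0}^{m}$ form a basis of $\R[x]_m$, which contains $p\in\R[x]_n$; hence $p$ has a unique expansion
\[
p\;=\;\sum_{j=0}^{m}b^{(m)}_j\,B_{m,j}\,,
\]
and since $B_{m,j}(x)=\binom{m}{j}x^j(1-x)^{m-j}$, this is precisely a representation of the form \eqref{th2-1} with $c_{j,\,m-j}=\binom{m}{j}\,b^{(m)}_j$ (and all other $c_{ij}=0$). So everything reduces to showing that, for $m$ large enough, \emph{all} the Bernstein coefficients $b^{(m)}_j$ are nonnegative.

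To get a handle on $b^{(m)}_j$ I would first record the elementary change of basis from monomials to the degree-$m$ Bernstein basis: for $0\le k\le m$,
\[
x^k\;=\;\sum_{j=k}^{m}\frac{\binom{j}{k}}{\binom{m}{k}}\,B_{m,j}(x)\,,
\]
which follows at once by expanding $x^k=x^k\bigl(x+(1-x)\bigr)^{m-k}$ (or by induction on $m$ via the degree-elevation identity $B_{m,j}=\frac{m+1-j}{m+1}B_{m+1,j}+\frac{j+1}{m+1}B_{m+1,j+1}$). Writing $p(x)=\sum_{k=0}^{n}c_kx^k$ in the monomial basis, this gives the closed form
\[
b^{(m)}_j\;=\;\sum_{k=0}^{n}c_k\,\frac{\binom{j}{k}}{\binom{m}{k}}\;=\;\sum_{k=0}^{n}c_k\,\frac{j(j-1)\cdots(j-k+1)}{m(m-1)\cdots(m-k+1)}\,.
\]

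The key step is the uniform estimate $\max_{0\le j\le m}\bigl|\,b^{(m)}_j-p(j/m)\,\bigr|=O(1/m)$, with the implied constant depending only on $p$. Indeed each ratio $\binom{j}{k}/\binom{m}{k}=\prod_{i=0}^{k-1}\frac{j-i}{m-i}$ satisfies $0\le \binom{j}{k}/\binom{m}{k}\le (j/m)^k\le 1$ (since $\frac{j-i}{m-i}\le \frac jm$ for $0\le i<m$ and $j\le m$), and a telescoping bound for products of numbers in $[0,1]$ gives $(j/m)^k-\binom{j}{k}/\binom{m}{k}\le\sum_{i=0}^{k-1}\bigl(\tfrac jm-\tfrac{j-i}{m-i}\bigr)\le\tfrac{k^2}{m-k}$ when $j\ge k$, while for $j<k$ the left side is just $(j/m)^k\le j/m\le\tfrac{k^2}{m-k}$; summing against $|c_k|$ over $k\le n$ produces a bound $|b^{(m)}_j-p(j/m)|\le C/(m-n)$ uniform in $j$, with $C$ depending only on $p$. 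Now $p$ is continuous and strictly positive on the compact interval $[0,1]$, so $\delta:=\min_{[0,1]}p>0$ and $p(j/m)\ge\delta$ for every $j$; hence $b^{(m)}_j\ge \delta-C/(m-n)$ for all $j$, and choosing any integer $m>n+C/\delta$ forces $b^{(m)}_j>0$ for $j=0,\dots,m$. Setting $c_{j,\,m-j}:=\binom{m}{j}b^{(m)}_j\ge0$ yields \eqref{th2-1}, as an identity of polynomials and hence valid for all $x\in\R$.

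The only genuinely delicate point is the uniform $O(1/m)$ control of $b^{(m)}_j-p(j/m)$; the rest is bookkeeping plus the positivity/compactness argument. An alternative that bypasses the monomial formula is to start from the degree-$n$ expansion $p=\sum_{i=0}^{n}a_iB_{n,i}$ and iterate degree elevation: after $m-n$ steps one gets $b^{(m)}_j=\sum_{i=0}^{n}\frac{\binom{j}{i}\binom{m-j}{n-i}}{\binom{m}{n}}\,a_i$, a convex combination of $a_0,\dots,a_n$ whose (hypergeometric) weights in $i$ concentrate near $i\approx nj/m$ as $m\to\infty$; a law-of-large-numbers type estimate again gives $b^{(m)}_j\to p(j/m)$ uniformly, and one concludes exactly as above. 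I would present the first route, since the closed form for $b^{(m)}_j$ is the most transparent.
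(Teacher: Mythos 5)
The paper does not prove Theorem \ref{th2}; it is stated as a classical result with a citation to Bernstein (1915), so there is no in-paper argument to compare against. Your proof is correct and is essentially the classical one: expand $p$ exactly in the degree-$m$ Bernstein basis and show that the coefficients $b^{(m)}_j$ converge uniformly to the sampled values $p(j/m)$, which are bounded below by $\delta=\min_{[0,1]}p>0$. The change-of-basis identity $x^k=\sum_{j\ge k}\binom{j}{k}\binom{m}{k}^{-1}B_{m,j}$ is right (it is the identity $\binom{m-k}{j-k}/\binom{m}{j}=\binom{j}{k}/\binom{m}{k}$ applied to $x^k(x+(1-x))^{m-k}$), the inequality $\binom{j}{k}/\binom{m}{k}\le(j/m)^k$ and the telescoping estimate $\prod a_i-\prod b_i\le\sum(a_i-b_i)$ for factors in $[0,1]$ are both valid, and the resulting uniform bound $|b^{(m)}_j-p(j/m)|\le C/(m-n)$ closes the argument for any $m>n+C/\delta$. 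One point worth making explicit in a write-up is that you are \emph{not} using the Bernstein approximation operator $\sum_j p(j/m)B_{m,j}$ (which would only approximate $p$, not represent it); you correctly work with the exact Bernstein coefficients of $p$ itself, which is what makes the conclusion an identity of polynomials valid for all $x\in\R$, as required by \eqref{th2-1}. I see no gap.
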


\begin{theorem}[Handelman (univariate)]
\label{th-handelman}
If $p\in\R[x]_{n}$ is (strictly) positive on $[0,1]$ then there exists $0\leq \c=(c_{ij})_{i+j\leq n}$ such that
\begin{equation}
 \label{th-handelman-1}
 p(x)\,=\,\sum_{i+j\leq n}c_{ij}\,x^i \,(1-x)^j\,,\quad\forall x\in\R\,.
\end{equation}
\end{theorem}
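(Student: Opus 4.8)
The plan is to obtain Theorem \ref{th-handelman} as a direct consequence of Bernstein's Theorem \ref{th2}, which immediately precedes it. First I would apply Theorem \ref{th2} to the strictly positive polynomial $p$: this furnishes an integer $m\ge \deg p$ together with nonnegative coefficients $(c_{ij})_{i+j=m}$ such that $p(x)=\sum_{i+j=m}c_{ij}\,x^i(1-x)^j$ for all $x\in\R$. The key observation is that a homogeneous expansion of total degree $m$ is already a special instance of a Handelman expansion indexed by $i+j\le n$: it suffices to declare every coefficient with $i+j<m$ to be zero and to read $n:=m$. Since $\deg p\le m=n$ we then have $p\in\R[x]_n$, so with this $n$ the representation $p=\sum_{i+j\le n}c_{ij}x^i(1-x)^j$ with $\mathbf c\ge 0$ is exactly of the form asserted in \eqref{th-handelman-1}.

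The step that must be handled with care is the degree bookkeeping. The index $n$ produced this way is the Bernstein degree $m$, which in general strictly exceeds $\deg p$; equivalently, the Handelman cone $\mathrm{cone}\{x^i(1-x)^j:i+j\le n\}$ is guaranteed to contain $p$ only once $n$ is taken large enough. A concrete illustration of why one cannot simply set $n=\deg p$ is $p(x)=(x-\tfrac12)^2+\varepsilon$: as $\varepsilon\downarrow 0$ the smallest admissible $n$ grows without bound, so the bound ``$i+j\le n$'' is to be read with $n$ the degree under which $p\in\R[x]_n$, chosen at least as large as the Bernstein degree dictated by $\min_{[0,1]}p$. I would therefore state the conclusion for that particular $n$, which is fully consistent with the hypothesis $p\in\R[x]_n$.

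As a self-contained alternative that does not route through Theorem \ref{th2}, I would argue by convex duality. The cone $C=\mathrm{cone}\{x^i(1-x)^j:i+j\le n\}\subset\R[x]_n$ is finitely generated, hence closed, so by the bipolar theorem $p\in C$ iff $L(p)\ge 0$ for every linear functional $L$ on $\R[x]_n$ satisfying $L(x^i(1-x)^j)\ge 0$ for all $i+j\le n$. Writing $m_k:=L(x^k)$ and $D_i^{(j)}:=L(x^i(1-x)^j)$, the identity $x^i(1-x)^{j}=x^i(1-x)^{j-1}-x^{i+1}(1-x)^{j-1}$ yields the Pascal-type recursion $D_i^{(j)}=D_i^{(j-1)}-D_{i+1}^{(j-1)}$, so the dual cone is precisely the set of truncated completely monotone sequences $(m_0,\dots,m_n)$. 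The hard part, and the genuine obstacle, will be to show that every such $L$ is the truncated moment functional of a nonnegative measure $\mu$ supported on $[0,1]$, for then $L(p)=\int_0^1 p\,d\mu\ge 0$ because $p>0$ there. This is exactly the finite Hausdorff moment problem, and it is precisely at this representability step that one is forced to enlarge $n$ beyond $\deg p$; this is why the Bernstein route above, where the degree is allowed to grow from the outset, gives the cleaner proof.
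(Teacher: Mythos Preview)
The paper does not supply its own proof of Theorem \ref{th-handelman}: it is stated as background and attributed to Handelman \cite{handelman}, with the sentence following the statement merely noting that it is the univariate specialization of the general Positivstellensatz on polytopes. There is thus no argument in the paper to compare your proposal against.

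On the substance: your primary route via Bernstein's Theorem \ref{th2} is correct and is the standard derivation. You are also right to flag, and to resolve, the degree bookkeeping: the $n$ appearing in \eqref{th-handelman-1} must in general be the Bernstein degree $m$, which can exceed $\deg p$, and your example $p_\varepsilon(x)=(x-\tfrac12)^2+\varepsilon$ indeed shows that one cannot force $n=\deg p$. (For $n=2$, the constraints $c_{ij}\ge 0$ together with matching constant, linear, and quadratic coefficients are infeasible once $\varepsilon<\tfrac14$.) This reading of $n$ is consistent with how the paper subsequently uses the result, where the index $n$ of $\mathscr{C}_n$ is a free parameter.

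One small sharpening of your duality sketch: the obstruction is exactly where you locate it, but your phrasing ``one is forced to enlarge $n$'' might suggest the truncated Hausdorff problem becomes solvable for larger $n$. It does not. For every fixed $n$ there exist truncated completely monotone sequences that are \emph{not} moment sequences on $[0,1]$; e.g.\ for $n=2$ the sequence $(m_0,m_1,m_2)=(1,\tfrac12,0)$ satisfies all the difference inequalities $D_i^{(j)}\ge 0$ for $i+j\le 2$, yet violates $m_1^2\le m_0m_2$ and hence admits no representing measure. Equivalently, the bidual $C^{**}$ equals $C$ (a polyhedral cone), which is strictly smaller than the cone of polynomials nonnegative on $[0,1]$, so the bipolar argument as stated cannot close. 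Your instinct to abandon that route in favor of the Bernstein reduction is therefore the right one.
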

Theorem \ref{th-handelman} is a specialization to the univariate case and $S=[0,1]$, of the more general
Handeman's Positivstellensatz \cite{handelman} valid on a convex polytope $S\subset\R^d$ with nonempty interior, while in (the older) Theorem \ref{th2} of Bernstein, all terms $x^i\,(1-x)^j$ have same degree $i+j=m$. 
Note that the two certificates of positivity \eqref{th1-1} (or \eqref{th1-11}) and \eqref{th-handelman-1} are quite different in nature. 
The first one \eqref{th1-1} which is the univariate version of Putinar's theorem, 
uses SOS polynomials ($\sigma_0,\sigma_1$) and is valid for polynomials that 
are \emph{nonnegative} on $[-1,1]$,
whereas \eqref{th-handelman-1} which uses a vector $\c$ of nonnegative scalars, is valid for polynomials
that are strictly positive on $[0,1]$. 

Moreover, 
testing whether a given $p\in\R[x]_{2t}$ satisfies \eqref{th1-1},
 reduces to solving a \emph{semidefinite program} (or an eigenvalue problem).
On the other hand, testing whether $p$
satisfies \eqref{th2-1} reduces to solving a \emph{linear program}.

\section{Main result}

In this section we show how Chebyshev (resp. Bernstein) polynomials are related in a similar manner to
the equilibrium measure of $[-1,1]$ (resp. Lebesgue measure on $[0,1]$).

\subsection{Chebyshev polynomials and equilibrium measure of $[-1,1]$}

\paragraph{Polynomial Pell's equation}

Let $(T)_{j\in\N}$ (resp. $(U_j)_{j\in\N}$) be the Chebyshev polynomials 
of the first (resp. second) kind. After normlization
$\widehat{T}_j:=T_j/\sqrt{2}$, $j=1,\ldots,n$, and 
$\widehat{U}_j:=U_j/\sqrt{2}$, $j=0,\ldots,n$, 
$(\widehat{T}_j)_{j\in\N}$ (resp. $(\widehat{U}_j)_{j\in\N}$ ) form a family of polynomials orthonormal w.r.t. $d\phi=dx/\pi\sqrt{1-x^2}$ (resp. $(1-x^2)\,d\phi=\sqrt{1-x^2}dx/\pi$).
It turns out that the Chebyshev polynomials satisfy the so-called polynomial Pell's equation
\eqref{eq:pell}, that is,
\[T_n(x)^2\,+(1-x^2)\,U_{n-1}(x)^2\,=\,1\,,\quad \forall x\in \R\,,\quad\forall n\geq1\,.\]
As already mentioned in introduction, observe that \eqref{eq:pell} is a nice illustration of Markov-Luk\'acs's theorem \eqref{th1-11}
for the constant polynomial $``1"$ which is indeed positive on $[-1,1]$. In other words,
the Chebyshev polynomials of first and second kind provide the Markov-Luk\'acs decomposition 
of the constant polynomial $``1"$.

In \cite{pell-2} this result was given an interpretation in terms of Christoffel functions of
the equilibrium measure $\phi$ of $[-1,1]$, namely:
\begin{theorem}[\cite{pell-2}]
\label{th-equi}
Let $x\mapsto g(x):=1-x^2$. For every $n\in\N$:
\begin{eqnarray}
\label{cheby-1}
1&=&\frac{1}{2n+1}
[\,\sum_{j=0}^n \widehat{T}_j^2+g\,\sum_{j=0}^{n-1}\widehat{U}_j^2\,]\\
\label{cheby-2}
&=&\frac{1}{2n+1}\,[\,\v_n(x)^T\M_n(\phi)^{-1}\v_n(x)\\
&&+\,g(x)\,\v_{n-1}(x)^T\M_{n-1}(g\cdot\phi)^{-1}\v_{n-1}(x)\,]\,,\quad\forall x\in\R\\
\label{cheby-3}
&=&\Lambda^{\phi}_n(x)^{-1}+g(x)\,\Lambda^{g\cdot\phi}_{n-1}(x)^{-1}\,,\quad\forall x\in\R\,.
\end{eqnarray}
\end{theorem}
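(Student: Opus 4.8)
The plan is to reduce the three equalities \eqref{cheby-1}–\eqref{cheby-3} to (i) the polynomial Pell equation \eqref{eq:pell}, (ii) the classical fact that Chebyshev polynomials, suitably normalized, are orthonormal bases, and (iii) the standard identity expressing the reciprocal Christoffel function as a Gram-type quadratic form in the monomial basis. The first equality \eqref{cheby-1} is essentially immediate: starting from \eqref{eq:pell} with $n$ replaced by $0,1,\dots,n$ and summing, one gets $n+1 = \sum_{j=0}^{n} T_j^2 + g\sum_{j=0}^{n-1} U_j^2$ (using $T_0^2 = 1$ and the Pell identity termwise), and dividing through by $2$ and using $\widehat{T}_j = T_j/\sqrt 2$, $\widehat{U}_j = U_j/\sqrt 2$ turns $n+1$ into $\frac{1}{2}(2n+2)$... so I would instead sum \eqref{eq:pell} for indices $1,\dots,n$, add the trivial identity $\widehat T_0^2 + \widehat T_0^2 = 1$ isn't right either — the cleanest route is: \eqref{eq:pell} gives $\widehat T_n^2 + g\,\widehat U_{n-1}^2 = 1/2$ after dividing by $2$, wait, that gives $T_n^2/2 + gU_{n-1}^2/2 = 1/2$. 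Hmm, the normalization constant is what forces the factor $2n+1$ rather than $2(n+1)$; I would check carefully that $\widehat T_0 = T_0/\sqrt 2 = 1/\sqrt 2$ so $\widehat T_0^2 = 1/2$, while for $j\ge 1$ one has $\int \widehat T_j^2\,d\phi = 1$ with the same $1/\sqrt2$ normalization, and likewise $\int g\,\widehat U_j^2\,d\phi = 1$. Summing the $n$ Pell identities for indices $1,\dots,n$ after halving yields $\sum_{j=1}^n \widehat T_j^2 + g\sum_{j=0}^{n-1}\widehat U_j^2 = n/2$, and adding $\widehat T_0^2 = 1/2$ is still not $n + 1/2$; so in fact the correct bookkeeping is that \eqref{eq:pell} halved reads $\widehat T_n^2 + g\widehat U_{n-1}^2 = 1/2$, summing for $n = 1,\dots,n$ gives $n/2$, and one checks separately the constant term contributes $1/2$, producing $\tfrac{1}{2}(2n+1)\cdot\frac{1}{2n+1} = \tfrac12$ — I would simply present this arithmetic carefully and cleanly, since the nontrivial content is only \eqref{eq:pell}.

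For \eqref{cheby-2}, the key step is the classical identity: if $(q_0,\dots,q_n)$ is an orthonormal basis of $\R[x]_n$ with respect to a measure $\nu$, and $\v_n(x) = (x^j)_{j\le n}$, then $\sum_{j=0}^n q_j(x)^2 = \v_n(x)^T \M_n(\nu)^{-1}\v_n(x)$, because both sides equal the reproducing kernel $K_n^\nu(x,x)$ of $\R[x]_n$ in $L^2(\nu)$. I would apply this once with $\nu = \phi$ and $q_j = \widehat T_j$ to get $\sum_{j=0}^n \widehat T_j^2 = \v_n^T \M_n(\phi)^{-1}\v_n$, and once with $\nu = g\cdot\phi$ and $q_j = \widehat U_j$ to get $\sum_{j=0}^{n-1}\widehat U_j^2 = \v_{n-1}^T\M_{n-1}(g\cdot\phi)^{-1}\v_{n-1}$; here I use that $g = 1-x^2 \ge 0$ on $[-1,1]$ so $g\cdot\phi$ is a genuine positive measure and its moment matrix is positive definite, hence invertible. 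Substituting these two identities into \eqref{cheby-1} gives \eqref{cheby-2}.

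Finally \eqref{cheby-3} is just the definition of the Christoffel function: $\Lambda_n^\nu(x)^{-1} = \sum_{j=0}^n q_j(x)^2 = \v_n(x)^T\M_n(\nu)^{-1}\v_n(x)$ for any orthonormal basis $(q_j)$ of $\R[x]_n$ in $L^2(\nu)$, which is basis-independent. Applying this with $\nu = \phi$ (degree $n$) and $\nu = g\cdot\phi$ (degree $n-1$) identifies the two quadratic forms in \eqref{cheby-2} with $\Lambda_n^\phi(x)^{-1}$ and $\Lambda_{n-1}^{g\cdot\phi}(x)^{-1}$ respectively, yielding \eqref{cheby-3}. The main obstacle, such as it is, is bookkeeping: getting the normalization constants and the index ranges exactly right so that the three normalizing factors ($2n+1$ in \eqref{cheby-1}, the number of squares, and the Christoffel degrees $n$ and $n-1$) are mutually consistent. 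I would verify the small cases $n=0$ and $n=1$ explicitly as a sanity check before writing the general argument.
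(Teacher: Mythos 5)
Your overall architecture is the right one and matches the route the paper itself indicates (the paper does not reprove Theorem \ref{th-equi} --- it cites \cite{pell-2} --- but the derivation sketched in its introduction is exactly yours: sum the Pell identities \eqref{eq:pell}, then pass to the orthonormal families and invoke the reproducing-kernel expression of the Christoffel function). Your steps for \eqref{cheby-2} and \eqref{cheby-3} are correct and standard: $\sum_j q_j(x)^2=\v_n(x)^T\M_n(\nu)^{-1}\v_n(x)=\Lambda_n^\nu(x)^{-1}$ for any $\nu$-orthonormal basis $(q_j)$ of $\R[x]_n$, applied with $\nu=\phi$ in degree $n$ and $\nu=g\cdot\phi$ in degree $n-1$.

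However, your derivation of \eqref{cheby-1} is left in a genuinely broken state: the computation you sketch ends with the right-hand side equal to $\tfrac12$ rather than $1$, and the factor of $2$ is never resolved. The source of the trouble is that you take the normalization $\widehat T_j=T_j/\sqrt2$ at face value. With that choice $\int\widehat T_j^2\,d\phi=\tfrac14$ for $j\ge1$ (since $\int T_j^2\,d\phi=\tfrac12$), not $1$ as you assert, and then $\sum_{j=0}^n\widehat T_j^2+g\sum_{j=0}^{n-1}\widehat U_j^2=(n+1)/2$, which is not $2n+1$. The family that is actually orthonormal w.r.t.\ $\phi$ is $\widehat T_0=T_0=1$ and $\widehat T_j=\sqrt2\,T_j$ for $j\ge1$ (one divides by the $L^2(\phi)$-norm, which is $1/\sqrt2$, rather than multiplying by it), and similarly $\widehat U_j=\sqrt2\,U_j$ for all $j\ge0$ w.r.t.\ $g\cdot\phi$. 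With that normalization the bookkeeping closes in one line,
\[
\sum_{j=0}^n\widehat T_j^2+g\sum_{j=0}^{n-1}\widehat U_j^2
\,=\,1+2\sum_{j=1}^n\bigl(T_j^2+g\,U_{j-1}^2\bigr)\,=\,1+2n\,,
\]
by \eqref{eq:pell} applied termwise; the special role of the index $j=0$ is also exactly what makes the count come out to $2n+1$ rather than $2n+2$. You correctly suspected that the normalization constant is what forces the factor $2n+1$, but the fix is to correct the normalization (the paper's own displayed formula $\widehat T_j=T_j/\sqrt2$ is best read as ``divide by the norm $1/\sqrt2$''), not to present the inconsistent arithmetic more carefully. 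Once $(\widehat T_j)$ and $(\widehat U_j)$ are genuinely orthonormal, the remaining two steps of your argument go through unchanged and also become consistent with each other, since \eqref{cheby-2} and \eqref{cheby-3} require orthonormality anyway.
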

So Theorem \ref{th-equi} states that the constant polynomial $``1"$ has a distinguished 
certificate of positivity on $S=[-1,1]$. Among all of its possible Putinar's representations
\eqref{th1-1}, the one in \eqref{cheby-1}-\eqref{cheby-3}  is directly related to the equilibrium measure $d\phi=dx/\pi\sqrt{1-x^2}$ of the interval $[-1,1]$.
In addition, as we next show, this distinguished certificate satisfies an \emph{extremal property}. 
\begin{lemma}[\cite{cras-1}]
\label{lemma1}
The couple of Gram matrices $(\A^*,\B^*)$ with $\A^*:=\M_n(\phi)^{-1}$ and $\B^*:=\M_{n-1}((1-x^2)\cdot\phi)^{-1}$,
is the unique optimal solution of the convex optimization problem:
\[\begin{array}{rl}
\displaystyle\sup_{\A,\B\succ0}&\{\,\log\mathrm{det}(\A)+\log\mathrm{det}(\B):\\
\mbox{s.t.}&2n+1=\underbrace{\v_n(x)^T\A\,\v_n(x)}_{\sigma_0(x)}
+g(x)\,\underbrace{\v_{n-1}(x)^T\B\,\v_{n-1}(x)}_{\sigma_1(x)}\,\:
\forall x\in\R\,\}\end{array}.\]
\end{lemma}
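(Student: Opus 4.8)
The plan is to read the optimization problem as a finite-dimensional convex program — the concave objective $\log\det\A+\log\det\B$ maximized over an affine slice of the open cone of positive definite matrices — and to certify optimality of $(\A^*,\B^*)$ by exhibiting the Lagrange multiplier associated with the affine constraint. The key point, and where Theorem \ref{th-equi} enters, is that this multiplier is a linear functional on $\R[x]_{2n}$ and that the correct choice is the truncated moment sequence $\bphi$ of the equilibrium measure $d\phi=dx/\pi\sqrt{1-x^2}$.

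Concretely, first I would rewrite the constraint. With $\sigma_0(x)=\v_n(x)^T\A\,\v_n(x)$ and $\sigma_1(x)=\v_{n-1}(x)^T\B\,\v_{n-1}(x)$, the requirement ``$2n+1=\sigma_0(x)+g(x)\,\sigma_1(x)$ for all $x\in\R$'' is the single affine equation $L(\A,\B)=b$, where $L$ sends $(\A,\B)$ to the coefficient vector (in the monomial basis) of the degree-$\le 2n$ polynomial $\sigma_0+g\,\sigma_1$ and $b=(2n+1,0,\dots,0)\in\R^{2n+1}$. For a linear functional $\psi\in\R[x]_{2n}^*$ with sequence $\bpsi$ one has $\psi(\sigma_0)=\langle\M_n(\bpsi),\A\rangle$ and $\psi(g\,\sigma_1)=(g\cdot\psi)(\sigma_1)=\langle\M_{n-1}(g\cdot\bpsi),\B\rangle$, so the adjoint is $L^{T}\bpsi=(\M_n(\bpsi),\,\M_{n-1}(g\cdot\bpsi))$. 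Because the objective is differentiable and concave on its open effective domain $\{\A,\B\succ 0\}$ and the sole constraint is affine, a feasible $(\A,\B)$ is a global maximizer if and only if its gradient $(\A^{-1},\B^{-1})$ is of the form $L^{T}\bpsi$ for some $\bpsi$, i.e. iff there is $\bpsi$ with $\A^{-1}=\M_n(\bpsi)$ and $\B^{-1}=\M_{n-1}(g\cdot\bpsi)$; no constraint qualification is needed (the constraint is affine), which also bypasses any separate attainment or compactness argument. I would then take $(\A,\B)=(\A^*,\B^*)=(\M_n(\phi)^{-1},\M_{n-1}(g\cdot\phi)^{-1})$ together with $\bpsi=\bphi$. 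Since $\phi$ and $g\cdot\phi=\sqrt{1-x^2}\,dx/\pi$ are positive measures supported on the infinite set $[-1,1]$, the matrices $\M_n(\phi)$ and $\M_{n-1}(g\cdot\phi)$ are positive definite, so $\A^*,\B^*$ are well defined and $(\A^{*})^{-1}=\M_n(\bphi)$, $(\B^{*})^{-1}=\M_{n-1}(g\cdot\bphi)$ hold by construction, while primal feasibility $L(\A^*,\B^*)=b$ is precisely identity \eqref{cheby-2} of Theorem \ref{th-equi}. Hence $(\A^*,\B^*)$ attains the supremum.

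Uniqueness would follow from strict concavity of $M\mapsto\log\det M$ on the positive definite cone together with convexity of the feasible set: if $(\A_1,\B_1)$ and $(\A_2,\B_2)$ were both optimal, their midpoint would be feasible with objective value strictly above the common optimum unless $\A_1=\A_2$ and $\B_1=\B_2$. The only mildly delicate step is the sufficiency of the first-order condition for a constrained maximum of a concave function with open effective domain; the affine nature of the constraint makes this elementary, so the real content of the lemma is the translation of Theorem \ref{th-equi} into the language of Lagrangian duality.
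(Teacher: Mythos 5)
Your proposal is correct and follows essentially the same route as the paper, which does not reproduce the proof but attributes it to \cite{cras-1} and describes it as combining Nesterov's duality correspondence between $Q_n(g)$ and $Q_n(g)^*$ with the generalized Pell identity \eqref{cheby-1}--\eqref{cheby-2} to identify the multiplier as the moment sequence of the equilibrium measure. Your write-up is a sound self-contained instantiation of exactly that argument: the adjoint computation $L^{T}\bpsi=(\M_n(\bpsi),\M_{n-1}(g\cdot\bpsi))$, feasibility via \eqref{cheby-2}, sufficiency of the stationarity condition for a concave objective under affine constraints, and uniqueness by strict concavity of $\log\det$.
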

The proof of Lemma \ref{lemma1} in \cite{cras-1} combines (1) a duality result of  Nesterov \cite{nesterov} which establishes 
a one-to-one correspondence between the interiors of the convex cone of polynomials $Q_n(g)$ and
its dual $Q_n(g)^*$, and (ii) the generalized Pell's equation \eqref{cheby-1}-\eqref{cheby-2} which allows to 
identify the element $\bphi=(\phi_j)_{j\leq 2n+1}$ in  $Q_n(g)^*$ associated with $1\in Q_n(g)$,
to be moments of the equilibium measure $d\phi=dx/\pi\sqrt{1-x^2}$ of the interval $[-1,1]$.

\subsection*{Partition of unity}

Observe that the polynomials $\{(T_i)_{i\leq n}/n+1,(g\,U_j/n+1)_{j\leq n-1}\}$  or
$\{(\widehat{T}_i)_{i\leq n}/2n+1,(g\,\widehat{U}_j/2n+1)_{j\leq n-1}\}$, 
form a partition of unity of the interval
$[-1,1]$. Lemma \ref{lemma1} establishes that it maximizes an entropy criterion among all possible
polynomial partitions of unity in the form $\sigma_0+g\,\sigma_1$ (a certificate of positivity 
on $S=[-1,1]$ for the polynomial $``1"$).

\subsection{Bernstein polynomials and Lebesgue measure on $[0,1]$}
Let $S=[0,1]$ and $s(n):={2+n\choose n}$.
The family of Bernstein polynomials $B_{n,j}\subset\R[x]$ is defined by:
\[x\mapsto B_{n,j}(x)\,:=\,{n\choose j}\,x^j\,(1-x)^{n-j}\,,\quad\forall j\leq n\,,\: n\,\in\,\N\,.\]
Among their numerous properties, they form a basis of $\R[x]_n$, they are nonnegative on $[0,1]$, bounded by $1$, and in addition:
\begin{equation}
\label{part-unity}
 1\,=\,\sum_{j=0}^n B_{n,j}(x)\,,\quad\forall x\in\R\,,\quad \forall n\in\N\,,
\end{equation}
so that they form a \emph{partition of unity} of the interval $[0,1]$. Moreover,
\begin{equation}
 \int_0^1 B_{n,j}(x)\,dx\,=\,\frac{1}{n+1}\,,\quad\forall j=0,\ldots,n\,,\quad \forall n\in\N\,.
\end{equation}
Interestingly, the \emph{envelope} $f_n$ of the Bernstein polynomials $B_{n,j}$ is the Chebyshev density
\[f_n(x)\,:=\,\frac{1}{n}\cdot \frac{1}{\sqrt{2\pi x(1-x)}}\,.\]

Next, for $n\in\N$ fixed and $S=[0,1]$, consider 
the convex cones $\mathscr{C}_n$ and its dual $\mathscr{C}^*_n$ defined by:
\begin{eqnarray*}
\mathscr{C}_n&=&\{\,\sum_{(i,j)\in\N^2_n}c_{ij}\,x^i\,(1-x)^{j}\::\: \c\geq0\,\}\:\subset\R[\x]_n\\
\mathscr{C}^*_n&=&\{\,\bphi\in\R[x]^*_n\::\: \phi(x^j\,(1-x)^{j})\,\geq\,0\,,\quad\forall (i,j)\in\N^2_n\ \}\:\subset\R[\x]_n^*\,.
\end{eqnarray*}
\begin{remark}
In view of Bernstein's Theorem \ref{th2},
we could also consider the smaller convex cone
\[\{\,\sum_{j=0}^nc_{j}\,x^j\,(1-x)^{n-j}\::\: \c\geq0\,\}\:\subset\R[\x]_n\,,\]
where all terms $x^j(1-x)^{n-j}$ have same degree $n$.
But since in Theorem \ref{th-handelman}, Handelman's Positivstellensatz
requires to consider all positive linear combinations of  powers $x^i (1-x)^j$ with $i+j\leq n$,
we will rather consider $\mathscr{C}_n$ as defined above.
\end{remark}
\begin{proposition}
\label{lem-aux}
$p\in\mathrm{int}(\mathscr{C}_n)$ if and only if there exists $0<\c=(c_{ij})_{i+j\leq n}$ such that
\[p\,=\,\sum_{(i,j)\in\N^2_n}c_{ij}\,x^i (1-x)^j\,,\quad\forall x\in\R\,.\]
\end{proposition}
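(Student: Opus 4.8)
The plan is to strip away the polynomial language and view $\mathscr{C}_n$ for what it is: the conic hull, inside the finite-dimensional space $\R[x]_n\cong\R^{n+1}$, of the finite family $\{x^i(1-x)^j:(i,j)\in\N^2_n\}$. The assertion is then just the classical description of the (relative) interior of a finitely generated convex cone as the set of its \emph{strictly positive} combinations of generators, specialized to this family. Two preliminary remarks set the stage. First, taking $j=0$ shows that the generators contain the monomial basis $1,x,\dots,x^n$, so they span $\R[x]_n$; hence $\mathscr{C}_n$ is full-dimensional in $\R[x]_n$, its interior there is nonempty, and ``$\mathrm{int}(\mathscr{C}_n)$'' in the statement is taken unambiguously inside $\R[x]_n$. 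Second --- and this is the only point requiring care --- for $n\ge2$ the generators are linearly dependent, e.g.\ $x^i(1-x)^j=x^{i+1}(1-x)^j+x^i(1-x)^{j+1}$ whenever $i+j<n$, so the coefficient vector $\c$ realizing a given $p$ is far from unique; the proof should exploit this redundancy rather than try to read $\c$ off from $p$.

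For the ``if'' direction, suppose $p=\sum_{(i,j)\in\N^2_n}c_{ij}\,x^i(1-x)^j$ with every $c_{ij}>0$. Since the ``coefficient map'' $(\lambda_{ij})_{(i,j)\in\N^2_n}\mapsto\sum_{(i,j)}\lambda_{ij}\,x^i(1-x)^j$ is a linear surjection onto $\R[x]_n$, it has a linear right inverse $r\mapsto(\beta_{ij}(r))_{(i,j)\in\N^2_n}$: concretely, expand $r$ in the basis $1,x,\dots,x^n$ (the generators with $j=0$) and put the remaining coordinates to $0$. Being linear on the finite-dimensional space $\R[x]_n$, each $\beta_{ij}$ is bounded, say $|\beta_{ij}(r)|\le M\|r\|$. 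Then $p+r=\sum_{(i,j)}\bigl(c_{ij}+\beta_{ij}(r)\bigr)\,x^i(1-x)^j$, and once $\|r\|<\bigl(\min_{(i,j)}c_{ij}\bigr)/M$ all the coefficients $c_{ij}+\beta_{ij}(r)$ are nonnegative, so $p+r\in\mathscr{C}_n$. Thus a ball around $p$ lies in $\mathscr{C}_n$, i.e.\ $p\in\mathrm{int}(\mathscr{C}_n)$.

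For the ``only if'' direction, suppose $p\in\mathrm{int}(\mathscr{C}_n)$, so $p+r\in\mathscr{C}_n$ for all $r$ with $\|r\|<\delta$, some $\delta>0$. Put $\bar q:=\sum_{(i,j)\in\N^2_n}x^i(1-x)^j$; it is a nonzero element of $\mathscr{C}_n$ because one of its summands is the constant $1$ (the term $(i,j)=(0,0)$) and the others are nonnegative on $[0,1]$, whence $\bar q\ge1$ on $[0,1]$. Choosing $\epsilon\in(0,\delta/\|\bar q\|)$ gives $p-\epsilon\bar q\in\mathscr{C}_n$, say $p-\epsilon\bar q=\sum_{(i,j)}d_{ij}\,x^i(1-x)^j$ with all $d_{ij}\ge0$; then $p=\sum_{(i,j)}(d_{ij}+\epsilon)\,x^i(1-x)^j$ with every coefficient $d_{ij}+\epsilon\ge\epsilon>0$, the required strictly positive Handelman-type representation.

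I do not expect a genuine obstacle here: the proof reduces to the two short perturbation arguments above, and the only subtlety is to keep straight that all interiors are taken inside $\R[x]_n$, together with the (benign) linear dependence of the generating family. Alternatively one could simply invoke the standard convex-analysis fact that the relative interior of a finitely generated cone equals the set of strictly positive combinations of its generators, and then note full-dimensionality; the self-contained version above is no longer.
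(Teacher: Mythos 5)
Your proof is correct and follows essentially the same route as the paper's: both directions are finite-dimensional perturbation arguments, the ``if'' part absorbing a small perturbation of $p$ into the coefficients of a basis contained in the generating family (you use the monomials $x^i$, the paper the degree-$n$ Bernstein basis), and the ``only if'' part subtracting a small multiple of an element admitting an all-positive representation (you use $\sum_{(i,j)\in\N^2_n}x^i(1-x)^j$, the paper the constant $1$ rewritten via the Bernstein partition of unity). No gaps.
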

\begin{proof}
{\it Only if part:} 
 If $p\in\mathrm{int}(\mathscr{C}_n)$ then 
  $p-\varepsilon\in \mathscr{C}_n$ for $\varepsilon>0$ sufficiently small, that is,
 \[p-\varepsilon\,=\,\sum_{(i,j)\in\N^2_n}c_{ij}\,x^i (1-x)^j\,,\quad\forall x\in\R\,.\]
 for some $\c\geq0$.  Next, using \eqref{part-unity} yields
\[n+1\,=\,\sum_{j=0}^n\sum_{k=0}^j{j\choose k}x^j(1-x)^{j-k}\,,\quad\forall x\in\R\,,\]
and therefore we obtain
\begin{eqnarray*}
p&=&\sum_{(i,j)\in\N^2_n}c_{ij}\,x^i (1-x)^j+\frac{\varepsilon}{n+1}\sum_{j=0}^n\sum_{k=0}^j{j\choose k}x^j(1-x)^{j-k}\,,\quad\forall x\in\R\\
&=&\sum_{(i,j)\in\N^2_n}(\underbrace{c_{ij}+\varepsilon_{ij}}_{\tilde{c}_{ij}>0})\,x^i (1-x)^j\,,\quad\forall x\in\R\,,
\end{eqnarray*}
for some $\tilde{\c}>0$.

\emph{If part:} Let $p=\sum_{(i,j)\in\N^2_n}c_{ij}\,x^i (1-x)^j$ with $\c>0$, and let $\varepsilon>0$ and $q\in\R[x]_n$ be such that $\Vert p-q\Vert<\varepsilon$. As $(B_{n,j})_{j\leq n}$ form a basis of $\R[x]_n$,
\[p-q\,=\,\sum_{i+j=n}\tau_{ij}\,x^i (1-x)^j\,,\]
for some $\btau\in\R^{n+1}$ with $\sup_{i+j=n}\vert\tau_{ij}\vert<\kappa\,\varepsilon$ (for some $\kappa>0$).
Defining $\tau_{ij}:=0$ whenever $(i,j)\in\N^2_n$ with $i+j<n$, one obtains
\[q\,=\,p-\sum_{i+j=n}\tau_{ij}\,x^i (1-x)^j\,=\,\sum_{(i,j)\in\N^2_n}(\underbrace{c_{ij}-\tau_{ij}}_{\tilde{c}_{ij}})\,x^i \,(1-x)^j\,,\]
where $\tilde{\c}=(\tilde{c}_{ij})\geq0$ provided that $\varepsilon>0$ is small enough. Therefore we conclude that $q\in\mathscr{C}_n$ whenever $\varepsilon$ is small enough, and so $p\in\mathrm{int}(\mathscr{C}_n)$.
\end{proof}
\begin{lemma}
\label{univariate-nesterov}
Let $p\in\mathrm{int}(\mathscr{C}_n)$ be fixed. Then  the optimization problem
\begin{equation}
\label{def-primal}
\begin{array}{rl}
\P:\quad \rho_n\,=\,\displaystyle\sup_{\c>0}&\{\,\displaystyle\sum_{(i+j)\in\N^2_{n}}\log{c_{ij}}:\:\\
\mbox{s.t.}& p(x)\,=\,
\displaystyle\sum_{(i,j)\in\N^2_n}c_{ij}\,x^i\,(1-x)^j\,,\quad\forall x\in\R\,\}\end{array}\,,
\end{equation}
is a convex optimization problem whose unique optimal solution $\c^*>0$ satisfies 
\begin{equation}
\label{univariate-nesterov-1}
 1/c^*_{ij}\,=\,\phi^*_p(x^i \,(1-x)^j)\,,\quad \forall (i,j)\in\N^2_n\,,
\end{equation}
for some element $\phi^*_p\in\mathscr{C}^*_n$, and therefore
\begin{equation}
\label{univariate-nesterov-2}
 p(x)\,=\,\sum_{(i,j)\in\N^2_n}\frac{x^i\,(1-x)^j}{\phi^*_p(x^i \,(1-x)^j)}\,,\quad \forall x\in\R\,.
\end{equation}
\end{lemma}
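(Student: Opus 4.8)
The strategy is to recognize $\P$ as the univariate Handelman analogue of the max-entropy/max-determinant problems handled by Nesterov's duality in \cite{nesterov}, and to run the standard Lagrangian argument adapted to the linear (rather than semidefinite) cone $\mathscr{C}_n$. First I would record that the problem is feasible: by hypothesis $p\in\mathrm{int}(\mathscr{C}_n)$, so Proposition~\ref{lem-aux} supplies at least one $\c>0$ with $p=\sum_{(i,j)\in\N^2_n}c_{ij}x^i(1-x)^j$, hence $\rho_n>-\infty$. Next I would argue the feasible set is bounded: since $p\in\mathrm{int}(\mathscr{C}_n)$ implies $\phi^*\in\mathscr{C}^*_n$ can be chosen strictly positive on all the generators $x^i(1-x)^j$ (this is exactly the dual statement of $p$ being interior — I would spell out that $p-\varepsilon q\in\mathscr{C}_n$ for all small perturbations forces the relevant dual functionals to be strictly positive), applying any such $\phi^*$ to the constraint gives $\sum_{(i,j)}c_{ij}\,\phi^*(x^i(1-x)^j)=\phi^*(p)$, a fixed positive number, and all summands are nonnegative, so each $c_{ij}$ is bounded above. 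Combined with $c_{ij}>0$ and the fact that $\log c_{ij}\to-\infty$ as $c_{ij}\to0^+$, the supremum is attained at an interior point $\c^*>0$ of the feasible polytope, and strict concavity of $\sum\log c_{ij}$ on the positive orthant gives uniqueness.

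The heart of the argument is the first-order optimality condition. At the optimum $\c^*$, the gradient of the objective, namely $(1/c^*_{ij})_{(i,j)\in\N^2_n}$, must be orthogonal to the tangent space of the affine constraint. The constraint $p(x)=\sum_{(i,j)\in\N^2_n}c_{ij}\,x^i(1-x)^j$ is an identity of polynomials of degree $\le n$; writing it in the monomial basis $\v_n(x)=(x^k)_{k\le n}$ it becomes a system of $n+1$ linear equations $A\c=\p$ where the columns of $A$ encode the coefficient vectors of $x^i(1-x)^j$. The tangent space is $\ker A=\{\boldsymbol{\tau}:\sum_{(i,j)}\tau_{ij}x^i(1-x)^j\equiv0\}$. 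Lagrangian stationarity then says there is a multiplier vector $\boldsymbol{\lambda}\in\R^{n+1}$ with $1/c^*_{ij}=\sum_k\lambda_k\,[x^i(1-x)^j]_k$ for every $(i,j)$; that is, defining the linear functional $\phi^*_p\in\R[x]^*_n$ by $\phi^*_p(x^k):=\lambda_k$, we get precisely $1/c^*_{ij}=\phi^*_p(x^i(1-x)^j)$, which is \eqref{univariate-nesterov-1}. Equation \eqref{univariate-nesterov-2} follows by substituting $c^*_{ij}=1/\phi^*_p(x^i(1-x)^j)$ back into the constraint. Finally $\phi^*_p\in\mathscr{C}^*_n$: by definition membership requires $\phi^*_p(x^i(1-x)^j)\ge0$ for all $(i,j)\in\N^2_n$, and here each such value equals $1/c^*_{ij}>0$.

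The main obstacle, and the step deserving the most care, is justifying that the supremum is \emph{attained} and lies in the relative interior of the feasible set — equivalently, that no optimizing sequence has some coordinate $c^{(m)}_{ij}\to0$. This is where the hypothesis $p\in\mathrm{int}(\mathscr{C}_n)$ is essential and must be used in its dual form: I would invoke Proposition~\ref{lem-aux} together with a separating-hyperplane argument to produce a $\phi^*\in\mathscr{C}^*_n$ that is \emph{strictly} positive on every generator $x^i(1-x)^j$ (for $p$ merely on the boundary such a strictly positive dual functional need not exist, and indeed then the problem can be unbounded or the optimum degenerate). Granting that, the bound $c_{ij}\le\phi^*(p)/\phi^*(x^i(1-x)^j)<\infty$ plus coercivity of $-\sum\log c_{ij}$ near the boundary of the orthant confines the maximization to a compact subset of the open orthant, so a maximizer exists and is interior; the rest is the routine Lagrange-multiplier computation above. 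One can alternatively phrase this as a direct appeal to Nesterov's theorem \cite{nesterov} on the one-to-one correspondence between $\mathrm{int}(\mathscr{C}_n)$ and $\mathrm{int}(\mathscr{C}^*_n)$ induced by the logarithmically-homogeneous barrier $\c\mapsto-\sum\log c_{ij}$ of the polyhedral cone, which packages exactly these facts.
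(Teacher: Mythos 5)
Your proposal is correct and follows essentially the same route as the paper: Slater's condition via Proposition~\ref{lem-aux}, boundedness of the relevant feasible set combined with coercivity of the log-barrier to get attainment at an interior point, the KKT/Lagrange-multiplier identity $1/c^*_{ij}=\phi^*_p(x^i(1-x)^j)$, and uniqueness from strict concavity. The only difference is that where you produce a strictly positive dual functional via a separating-hyperplane argument (and slightly misattribute its existence to the interiority of $p$), the paper simply evaluates the constraint at a point $x_0\in(0,1)$, which is already such a functional and yields the bound $c_{ij}<p(x_0)/\bigl(x_0^i(1-x_0)^j\bigr)$ with no extra machinery.
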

\begin{proof}
 We first prove the $\rho_n$ is finite. As 
  $p\in\mathrm{int}(\mathscr{C}_n)$, by Lemma \ref{lem-aux} there exists $\hat{\c}>0$ such that
 $p=\sum_{(i,j)\in\N^2_n}\hat{c}_{ij}\,x^i (1-x)^j$,  and so  Slater condition\footnote{Slater condition holds 
 for the convex optimization problem $\min\{\,f(\x): g_j(\x)\leq0\,,\:j\in J\}$ if there exists 
 $\x_0$ such that $g_j(\x_0)<0$, for all $j\in J$.} holds for $\P$ and
 $\rho_n\geq\sum_{(i,j)\in\N^2_n}\log{\hat{c}_{ij}}>-\infty$. 
 Therefore we may and will consider only the (nonempty) subset of feasible solutions 
 \[\Delta\,:=\,\{\,\c\geq0:\: \sum_{(i,j)\in\N^2_n}\log{c_{ij}}\,
 \geq\,\sum_{(i,j)\in\N^2_n}\log{\hat{c}_{ij}}\,\}\,.\]
   Moreover, for any such feasible solution $\c\in\Delta$ of $\P$, with $x_0\in (0,1)$ fixed, 
 \[p(x_0)\,=\,\sum_{(i,j)\in\N^2_n}c_{ij}\,x_0^i\,(1-x_0)^j\quad\Rightarrow\quad 
 c_{ij}\,<\,\frac{p(x_0)}{x_0^i (1-x_0)^j}\,,\quad\forall (i,j)\in\N^2_n\,,\]
  and therefore  the  set $\Delta$ is compact, which in turn implies that $\P$ has an optimal solution $\c^*\in\Delta$ (hence with $\c^*>0$).
  Next, the necessary Karush-Kuhn-Tucker (KKT)-optimality conditions 
 impose that there exists $\phi^*_p\in\mathscr{C}^*_n$ such that \eqref{univariate-nesterov-1} holds, which in turn yields \eqref{univariate-nesterov-2}. Then uniqueness of $\c^*$ follows from the fact that the objective function is strictly concave.
 \end{proof}

Lemma \ref{univariate-nesterov} is the analogue for the cones $\mathscr{C}_n$ and $\mathscr{C}_n^*$
of Nesterov's one-to-one correspondence between the cones $Q_n(g)$ and $Q_n(g)^*$.
Of course \eqref{univariate-nesterov-2} raises a natural question: \emph{What is the element $\phi^*_p\in\mathscr{C}_n^*$ associated with
$p\in\mathrm{int}(\mathscr{C}_n)$? } We answer this question for the constant polynomial $``1"$.
\begin{theorem}
\label{th1}
Let $s(n)=(n+1)(n+2)/2$ and let $p\in\R[x]_n$ be the constant polynomial $x\mapsto p(x)=s(n)$ for all $x\in\R$. Then for every $n\in\N$, the vector $\c^*\in\R^{s(n)}$ with  
\[1/c^*_{ij}\,=\,\phi^*_p(x^i (1-x)^j)\,=\,\int_0^1x^i\,(1-x)^j\,dx\,,\quad \forall (i,j)\in\N^2_n\,,\]
is the unique optimal solution 
of \eqref{def-primal}, and:
\begin{eqnarray}
\label{eq:th1-1}
 1&=&\frac{1}{s(n)}\sum_{(i,j)\in\N^2_n}\frac{x^i \,(1-x)^j}{\int_0^1 x^i(1-x)^jdx}\\
\nonumber
 &=&\frac{1}{s(n)}
 \sum_{(i,j)\in\N^2_n}\frac{B_{i+j,i}(x)}{\int_0^1B_{i+j,i}(x)dx}\,,\quad\forall x\in\R\,.
\end{eqnarray}
That is, $\phi^*_p$ is the Lebesgue measure on $[0,1]$, and the polynomials $(\frac{(k+1)}{s(n)}B_{k,j})$ form a partition  of unity of $[0,1]$.
\end{theorem}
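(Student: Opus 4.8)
The plan is to reduce the whole statement to one combinatorial identity and then feed it into Lemma \ref{univariate-nesterov}. First I would record the Beta-integral $\int_0^1 x^i(1-x)^j\,dx=\frac{i\fac\,j\fac}{(i+j+1)\fac}$ together with the elementary $\int_0^1 B_{k,j}(x)\,dx=\frac1{k+1}$; combined, they give, for every $(i,j)$,
\[
\frac{x^i(1-x)^j}{\int_0^1 x^i(1-x)^j\,dx}\;=\;(i+j+1)\binom{i+j}{i}\,x^i(1-x)^j\;=\;(i+j+1)\,B_{i+j,i}(x)\;=\;\frac{B_{i+j,i}(x)}{\int_0^1 B_{i+j,i}(x)\,dx}\,.
\]
Hence the two right-hand sides appearing in \eqref{eq:th1-1} are literally the same polynomial, and it suffices to establish the first equality in \eqref{eq:th1-1}.

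Next I would prove $\sum_{(i,j)\in\N^2_n}(i+j+1)\,B_{i+j,i}(x)=s(n)$ for all $x\in\R$ by re-indexing the sum via $k:=i+j$, so that $k$ runs over $\{0,\dots,n\}$ and, for fixed $k$, the index $i$ runs over $\{0,\dots,k\}$; by the Bernstein partition of unity \eqref{part-unity} the inner sum $\sum_{i=0}^k B_{k,i}(x)$ equals $1$, and the whole double sum collapses to $\sum_{k=0}^n(k+1)=\frac{(n+1)(n+2)}{2}=s(n)$. As a by-product this exhibits the constant polynomial $p\equiv s(n)$ as $\sum_{(i,j)\in\N^2_n}\tilde c_{ij}\,x^i(1-x)^j$ with $\tilde c_{ij}:=\bigl(\int_0^1 x^i(1-x)^j\,dx\bigr)^{-1}>0$, so Proposition \ref{lem-aux} gives $p\in\mathrm{int}(\mathscr{C}_n)$ and Lemma \ref{univariate-nesterov} applies to this $p$.

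Finally I would identify the optimizer of $\P$. By Lemma \ref{univariate-nesterov}, $\P$ is a convex program with a unique optimal solution, and I would check directly that $\c^*=(\tilde c_{ij})$ is that solution: taking the Lebesgue functional $\ell:q\mapsto\int_0^1 q(x)\,dx$ as Lagrange multiplier, the stationarity relations $1/\tilde c_{ij}=\ell(x^i(1-x)^j)$ hold by the very definition of $\tilde c_{ij}$, while primal feasibility $\sum_{(i,j)\in\N^2_n}\tilde c_{ij}\,x^i(1-x)^j=s(n)=p$ is exactly the identity just proved; since the objective is concave and the feasible set affine, these KKT conditions are sufficient for global optimality, and strict concavity forces uniqueness, so $\c^*=(\tilde c_{ij})$ and $\phi^*_p=\ell$, i.e. $\phi^*_p$ is Lebesgue measure on $[0,1]$. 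Dividing the identity of the second step by $s(n)$ and re-indexing $k=i+j$, $j\leftarrow i$ then yields $1=\frac1{s(n)}\sum_{0\le j\le k\le n}(k+1)\,B_{k,j}(x)$, the claimed partition of unity $\bigl(\tfrac{k+1}{s(n)}B_{k,j}\bigr)$ of $[0,1]$. I do not foresee a genuine obstacle: the only points needing care are the Beta-integral evaluation, the (trivial) fact that $\ell\in\mathrm{int}(\mathscr{C}_n^*)$ since $x^i(1-x)^j>0$ on $(0,1)$ so the $\tilde c_{ij}$ are well defined and positive, and the sufficiency of the KKT conditions for the concave program; all the substance is concentrated in the single identity $\sum_{k=0}^n(k+1)\sum_{i=0}^k B_{k,i}\equiv s(n)$, which at once proves $p\in\mathrm{int}(\mathscr{C}_n)$, supplies the primal-feasible candidate, and is the final partition of unity.
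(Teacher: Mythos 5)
Your proposal is correct and follows essentially the same route as the paper: verify the identity $\sum_{(i,j)\in\N^2_n}(i+j+1)B_{i+j,i}\equiv s(n)$ via the re-indexing $k=i+j$ and the Bernstein partition of unity, then use Slater's condition and the sufficiency of the KKT conditions to certify that the Lebesgue functional is the multiplier and $c^*_{ij}=1/\int_0^1 x^i(1-x)^j\,dx$ is the unique optimizer. The only (immaterial) difference is that the paper exhibits a separate Slater point from $n+1=\sum_{k\le n}\sum_{j\le k}B_{k,j}$, whereas you use the candidate $\c^*$ itself for that purpose.
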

\begin{proof}
From the proof of Lemma \ref{univariate-nesterov} we have seen that $\P$ in \eqref{def-primal} is a convex optimization problem with a unique optimal solution which satisfies the KKT-optimality conditions \eqref{univariate-nesterov-1}. Next, since 
\[n+1\,=\,\sum_{k=0}^n\sum_{j=0}^k\,B_{kj}(x)\,=\,\sum_{(i,j)\in\N^2_n}\hat{c}_{ij} \,x^i (1-x)^j\,,\]
with $\hat{\c}>0$, Slater condition holds for $\P$. This in turn implies that
the first-order KKT optimality condition are not only necessary but also sufficient. 
So let $\phi^*$ be the Lebesgue mesure on $[0,1]$. We next prove that
$\c^*=(c^*_{ij})$ with $1/c^*_{ij}:=\phi^*(x^i (1-x)^j)$, for all $(i,j)\in\N^2_n$ 
is feasible for $\P$ and hence is the unique optimal solution of $\P$. Indeed
\[1/c^*_{ij}\,:=\,\phi^*(x^i\,(1-x)^j)\,=\,\frac{\phi^*(B_{i+j,i})}{{i+j\choose i}}\,=\,\frac{1}{(i+j+1)\cdot{i+j\choose i}}\,,\]
and therefore:
\begin{eqnarray*}
\sum_{(i,j)\in\N^2_n} c^*_{ij}\,x^i\,(1-x)^{j}&=&\sum_{(i,j)\in\N^2_n} (i+j+1)\,B_{i+j,i}(x)\\
&=&\sum_{k=0}^n\sum_{(i,j)\in\N^2_n:i+j=k} (i+j+1)\,B_{i+j,i}(x)\\
&=&\sum_{k=0}^n(k+1)\sum_{j=0}^k B_{k,j}(x)\\
&=&\sum_{k=0}^n(k+1)\,=\,\frac{(n+1)(n+2)}{2}\,=\,s(n)\,.
\end{eqnarray*}
 \end{proof}
\begin{remark}
Theorem \ref{th1} reveals that the linear functional $\phi^*_p$ of Lemma \ref{univariate-nesterov} associated with
the constant polynomial $p=s(n)$, is the Lebesgue measure on $[0,1]$. So \eqref{eq:th1-1} is indeed the analogue for Bernstein polynomials and Lebesgue measure on $S=[0,1]$, of \eqref{cheby-1} 
for Chebyshev polynomials and the equilibrium measure $dx/\pi\sqrt{1-x^2}$ on $S=[-1,1]$. Both 
resulting partitions of unity maximize an entropy criterion of a very similar flavor.
\end{remark}

\section{Extension to the canonical simplex}

In \cite{cras-3} we have proved a similar (but only partial)  result for the $2$-dimensional canonical simplex
$S:=\{(x,y): x+y\leq 1\,;\: x,y\,\geq0\,\}$ whose equilibrium measure
is $d\phi(x,y)=dx\,dy/\pi\sqrt{x\,y\,(1-x-y)}$.

Namely 
let $s(n):={2+n\choose 2}$,  and introduce the quadratic polynomials
$(x,y)\mapsto g_1(x,y):=x\,y$, 
$(x,y)\mapsto g_2(x,y)\:=x\,(1-x-y)$, and
$(x,y)\mapsto g_3(x,y):=y\,(1-x-y)$.
For $n=1,2,3$, in \cite{cras-3} we have obtained
\begin{equation}
\label{simplex-equi}
s(n)+s(n-1)\,=\,\Lambda^{\phi}_n(x,y)^{-1}+\sum_{i=1}^3 g_i(x,y)\,\Lambda^{g_i\cdot\phi}_{n-1}(x,y)^{-1}\,,
\end{equation}
for all $(x,y)\in\R^2$, and indeed,
\eqref{simplex-equi} is a perfect analogue for the simplex, of \eqref{cheby-3} for the interval $[-1,1]$. 

We next prove the analogue of \eqref{eq:th1-1} for the $d$-dimensional simplex, and $n=1,2$.
We will use the following known (intermediate) result.
\begin{proposition}
\label{prop-simplex}
Let $\phi^*$ be the uniform probability measure on 
$S=\{\,\x\in\R^d_+:\sum_ix_i\leq 1\,\}$, 
with moments $\bphi^*=(\phi^*_{\balpha})_{\balpha\in\N^d}$. Then
\begin{equation}
\label{int-simplex}
\phi^*_{\balpha}\,=\,\phi(\x^{\balpha})\,=\,\frac{d\mathrm{!}\,\alpha_1\mathrm{!}\cdots\alpha_d\mathrm{!}}{(d+\vert\balpha\vert)\mathrm{!}}\,,\quad\forall \balpha\in\N^d\,.
\end{equation}
\end{proposition}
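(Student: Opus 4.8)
The plan is to compute the moments of the uniform probability measure on the $d$-dimensional simplex directly, by reducing to the classical Dirichlet integral. First I would recall that the uniform probability measure $\phi^*$ on $S=\{\x\in\R^d_+:\sum_i x_i\leq 1\}$ is $d\phi^*(\x)=\frac{1}{\vol(S)}\,d\x$, where $\vol(S)=1/d\fac$ is the Lebesgue volume of the standard simplex; this normalization constant is itself the $\balpha=0$ case of the formula to be proved and can be obtained from the same computation, so the argument is self-contained.

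The key step is the Dirichlet integral identity
\[
\int_{S}\x^{\balpha}\,d\x\,=\,\int_{\{\x\geq0,\,\sum x_i\leq 1\}}x_1^{\alpha_1}\cdots x_d^{\alpha_d}\,dx_1\cdots dx_d\,=\,\frac{\alpha_1\fac\,\cdots\,\alpha_d\fac}{(d+\vert\balpha\vert)\fac}\,.
\]
I would prove this by induction on $d$. For $d=1$ it is the elementary beta-integral $\int_0^1 x^{\alpha_1}\,dx=1/(\alpha_1+1)=\alpha_1\fac/(1+\alpha_1)\fac$. For the inductive step, fix $x_d=t\in[0,1]$ and integrate the remaining variables over the scaled simplex $\{x_1,\dots,x_{d-1}\geq0,\ \sum_{i<d}x_i\leq 1-t\}$; the substitution $x_i=(1-t)y_i$ turns the inner integral into $(1-t)^{\,(d-1)+\vert\balpha'\vert}$ times the $(d-1)$-dimensional integral over the standard simplex, where $\balpha'=(\alpha_1,\dots,\alpha_{d-1})$. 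Applying the induction hypothesis to that inner integral leaves
\[
\int_{S}\x^{\balpha}\,d\x\,=\,\frac{\alpha_1\fac\cdots\alpha_{d-1}\fac}{(d-1+\vert\balpha'\vert)\fac}\int_0^1 t^{\alpha_d}(1-t)^{\,d-1+\vert\balpha'\vert}\,dt\,,
\]
and the one-dimensional beta integral $\int_0^1 t^{\alpha_d}(1-t)^{\,d-1+\vert\balpha'\vert}dt=\frac{\alpha_d\fac\,(d-1+\vert\balpha'\vert)\fac}{(d+\vert\balpha\vert)\fac}$ collapses this to the claimed expression.

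Finally I would divide by $\vol(S)=1/d\fac$ (the $\balpha=0$ instance gives $\vol(S)=0\fac^{\,d}/d\fac=1/d\fac$) to obtain
\[
\phi^*_{\balpha}\,=\,\phi^*(\x^{\balpha})\,=\,d\fac\cdot\frac{\alpha_1\fac\cdots\alpha_d\fac}{(d+\vert\balpha\vert)\fac}\,=\,\frac{d\fac\,\alpha_1\fac\cdots\alpha_d\fac}{(d+\vert\balpha\vert)\fac}\,,
\]
which is \eqref{int-simplex}. I do not anticipate a genuine obstacle here: the only mild bookkeeping point is getting the exponent $(d-1)+\vert\balpha'\vert$ right in the scaling substitution and keeping track of which factorials the beta integral produces, so that the induction telescopes cleanly. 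One could alternatively cite this as a standard Dirichlet-distribution moment formula, but the inductive proof above is short enough to include in full.
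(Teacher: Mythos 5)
Your proof is correct. The paper offers no proof of Proposition \ref{prop-simplex} at all --- it is stated as a ``known (intermediate) result'' and used as a black box --- so there is no argument to compare against; your inductive derivation via the scaling substitution and the one-dimensional beta integral is the standard Dirichlet-integral computation, the bookkeeping (the exponent $(d-1)+\vert\balpha'\vert$, the beta integral producing $\alpha_d\fac\,(d-1+\vert\balpha'\vert)\fac/(d+\vert\balpha\vert)\fac$, and the normalization $\vol(S)=1/d\fac$ recovered from the case $\balpha=0$) all checks out, and the result matches \eqref{int-simplex} exactly.
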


Next, for each $n\in\N$, let $\hat{s}(n):={d+1+n\choose n}$,
i.e., $\hat{s}(n)$ is the dimension of $\R[x_1,\ldots,x_{d+1}]_n$ as a vector space. Let 
$\x\mapsto g_j(\x):=x_j$, $j=1,\ldots,d$, and $\x\mapsto g_{d+1}(\x):=1-\sum_{j=1}^d x_j$, so that
$S=\{\,\x\in\R^d: g_j(\x)\geq0\,,\quad j=1,\ldots,d+1\,\}$. Next, for every $\balpha\in\N^{d+1}$,
define the polynomial $\g^{\balpha}\in\R[\x]$ by:
\[\x\mapsto \g(\x)^{\balpha}\,:=\,g_1(\x)^{\alpha_1}\cdot g_2(\x)^{\alpha_2}\cdots g_{d+1}(\x)^{\alpha_{d+1}}\,.\]
Similarly define the convex cone $\mathscr{C}_n\subset\R[\x]_n$ by:
\[\mathscr{C}_n\,:=\,\{\,\sum_{\balpha\in\N^{d+1}_n}c_{\balpha}\,\g(\x)^{\balpha}\,:\quad \c=(c_{\balpha})_{\balpha\in\N^{d+1}_n}\geq0\,\}\,.\]
Its dual $\mathscr{C}^*_n$ is the convex cone defined by:
\[\mathscr{C}^*_n\,:=\,\{\,\bphi\in\R^{s(n)}:\quad \phi(\g^{\balpha})\,\geq\,0\,,\quad 
\forall\balpha\in\N^{d+1}_n\,\}\,.\]

\begin{theorem}
\label{th-simplex}

Let $\phi^*$ be probability with uniform distribution on the simplex $S$. With $n=1,2$, the optimization problem:
\begin{equation}
\P:\quad \sup_{\c\geq0}\,\{\,\displaystyle\sum_{\balpha\in\N^{d+1}_n}\log{c_{\balpha}}:\quad 
\hat{s}(n)\,=\,\sum_{\balpha\in\N^{d+1}_n}c_{\balpha}\,\g(\x)^{\balpha}\,,\quad\forall \x\in\R^d\,\}\,,
\end{equation}
has a unique optimal solution $0<\c^*\in\R^{\hat{s}(n)}$ which satisfies $1/c^*_{\balpha}=\phi^*(\g^{\balpha})$ for all $\balpha\in\N^{d+1}_{\balpha}$, and
\begin{equation}
 \label{simplex-1-2}
 1\,=\,\frac{1}{\hat{s}(n)}\,\sum_{\balpha\in\N^{d+1}_n}\frac{\g(\x)^{\balpha}}{\phi^*(\g^{\balpha})}\,,\quad\forall \x\in\R^d\,.
\end{equation}
Therefore the $\hat{s}(n)$ polynomials $\frac{1}{\hat{s}(n)}\,\{\frac{\g^{\balpha}}{\phi^*(\g^{\balpha})}\}_{\balpha\in\N^{d+1}_n}$ provide the simplex $S$ with a partition of unity that maximizes an entropy criterion 
and is strongly related to the uniform distribution $\phi^*$ on $S$.
\end{theorem}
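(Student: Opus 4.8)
The plan is to mirror the one‑dimensional argument of Theorem~\ref{th1} exactly, with the simplex integration formula of Proposition~\ref{prop-simplex} playing the role of $\int_0^1 x^i(1-x)^j\,dx$. First I would establish that the program $\P$ is a convex optimization problem with a unique optimal solution satisfying the KKT conditions, by the very same reasoning as in Lemma~\ref{univariate-nesterov}: the objective $\sum_{\balpha}\log c_{\balpha}$ is strictly concave, and on any sublevel set of the objective each coordinate $c_{\balpha}$ is bounded above by $\hat{s}(n)/\g(\x_0)^{\balpha}$ for a fixed interior point $\x_0\in\mathrm{int}(S)$ (where $\g(\x_0)^{\balpha}>0$), so the feasible set intersected with a sublevel set is compact and an optimum $\c^*>0$ exists. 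To invoke sufficiency of the first‑order conditions, I need a strictly feasible point, i.e.\ a representation $\hat{s}(n)=\sum_{\balpha\in\N^{d+1}_n}\hat c_{\balpha}\,\g(\x)^{\balpha}$ with $\hat\c>0$; this is supplied by the multinomial identity $\bigl(\sum_{j=1}^{d+1}g_j(\x)\bigr)^{n}=1$ together with its telescoping over degrees $0,\dots,n$, which is the higher‑dimensional analogue of the identity $n+1=\sum_{k\le n}\sum_j B_{k,j}$ used in the univariate proof (and here one must check that the resulting coefficients of all monomials $\g^{\balpha}$ with $|\balpha|\le n$ are strictly positive — which they are, being sums of multinomial coefficients).

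Second, I would verify that the candidate $\c^*$ defined by $1/c^*_{\balpha}=\phi^*(\g^{\balpha})$ is feasible, i.e.\ that
\[
\sum_{\balpha\in\N^{d+1}_n}\frac{\g(\x)^{\balpha}}{\phi^*(\g^{\balpha})}\,=\,\hat{s}(n)\,,\qquad\forall\,\x\in\R^d\,.
\]
Using Proposition~\ref{prop-simplex}, for $\balpha\in\N^{d+1}$ one computes $\phi^*(\g^{\balpha})=\phi^*\bigl(x_1^{\alpha_1}\cdots x_d^{\alpha_d}(1-\sum_j x_j)^{\alpha_{d+1}}\bigr)=\dfrac{d\mathrm{!}\,\alpha_1\mathrm{!}\cdots\alpha_{d+1}\mathrm{!}}{(d+|\balpha|)\mathrm{!}}$, so that $1/\phi^*(\g^{\balpha})=\dfrac{(d+|\balpha|)\mathrm{!}}{d\mathrm{!}\,\alpha_1\mathrm{!}\cdots\alpha_{d+1}\mathrm{!}}=\binom{d+|\balpha|}{d}\binom{|\balpha|}{\balpha}$, the product of a ``degree factor'' and a multinomial coefficient. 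Grouping by total degree $k=|\balpha|$, the inner sum $\sum_{|\balpha|=k}\binom{k}{\balpha}\g(\x)^{\balpha}=\bigl(\sum_{j=1}^{d+1}g_j(\x)\bigr)^k=1$ collapses, leaving $\sum_{k=0}^{n}\binom{d+k}{d}$; and the hockey‑stick identity gives $\sum_{k=0}^{n}\binom{d+k}{d}=\binom{d+1+n}{d+1}=\hat{s}(n)$. This identifies $\c^*$ as the (unique) optimizer and yields \eqref{simplex-1-2}, with $\phi^*$ the uniform distribution on $S$.

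The only genuine gap is that, unlike the univariate case, for general $d$ I cannot claim that the polynomial $\x\mapsto\hat{s}(n)$ lies in $\mathrm{int}(\mathscr{C}_n)$ for all $n$ — this is precisely why the statement is restricted to $n=1,2$. For $n=1$ the cone $\mathscr{C}_1$ is generated by $1,g_1,\dots,g_{d+1}$, whose positive span (after normalizing $\sum g_j=1$) is a full‑dimensional simplex containing the constant in its interior, so the argument goes through. For $n=2$ one must check directly that the constant admits a representation with \emph{all} coefficients strictly positive; the telescoped multinomial identity above does exactly this, and degree $2$ is low enough that the coefficient of every $\g^{\balpha}$, $|\balpha|\le 2$, is manifestly positive. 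I expect the main obstacle — and the reason the theorem is only ``partial'' — to be that for $n\ge 3$ the powers $\g^{\balpha}$ become linearly dependent or the natural representation of the constant acquires vanishing (or no longer controllably positive) coefficients, so strict feasibility, hence sufficiency of KKT and uniqueness, can fail; the present proof deliberately stays within $n\in\{1,2\}$ where strict positivity of the telescoped coefficients is transparent.
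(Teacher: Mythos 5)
Your proof is correct, and it follows the same overall strategy as the paper: set up the KKT conditions for the concave program $\P$, take the uniform probability measure $\phi^*$ on $S$ as the multiplier, and verify that the candidate $\c^*$ with $1/c^*_{\balpha}=\phi^*(\g^{\balpha})$ is feasible. Where you genuinely differ is in that feasibility check. The paper verifies the identity $\hat{s}(n)=\sum_{\balpha}\g(\x)^{\balpha}/\phi^*(\g^{\balpha})$ by brute-force expansion separately for $n=1$ and $n=2$, which is exactly why the theorem is stated only for those degrees. You instead observe that $1/\phi^*(\g^{\balpha})=\binom{d+|\balpha|}{d}\binom{|\balpha|}{\balpha}$, collapse each fixed-degree block via the multinomial theorem applied to $\sum_{j=1}^{d+1}g_j(\x)=1$, and finish with the hockey-stick identity $\sum_{k=0}^{n}\binom{d+k}{d}=\binom{d+n+1}{d+1}=\hat{s}(n)$. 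This computation is valid for \emph{every} $n$ and $d$, so your argument actually proves the theorem without the restriction to $n\in\{1,2\}$ --- it settles the extension to higher degrees that the paper's conclusion lists as open. (You do need the Dirichlet moment formula $\phi^*(\g^{\balpha})=d\fac\,\alpha_1\fac\cdots\alpha_{d+1}\fac/(d+|\balpha|)\fac$ for general $\balpha\in\N^{d+1}$, which strictly speaking extends Proposition \ref{prop-simplex} from $\balpha\in\N^d$; the paper uses the same values implicitly, and the formula is standard.)

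One internal inconsistency worth fixing: your closing paragraph speculates that strict feasibility (hence Slater and sufficiency of KKT) might fail for $n\ge 3$, and offers this as the reason for the restriction to $n=1,2$. But your own second paragraph already exhibits, for every $n$, a representation of the constant $\hat{s}(n)$ with all coefficients $c^*_{\balpha}=1/\phi^*(\g^{\balpha})>0$ strictly positive --- and the telescoped multinomial identity $n+1=\sum_{k=0}^{n}\bigl(\sum_j g_j\bigr)^k$, suitably rescaled, gives another one. So Slater holds for all $n$, the worry is unfounded, and the correct conclusion to draw is that your proof is \emph{stronger} than the stated theorem, not that it barely reaches $n=2$. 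Linear dependence of the $\g^{\balpha}$ for larger $n$ is also harmless: uniqueness of $\c^*$ comes from strict concavity of $\c\mapsto\sum_{\balpha}\log c_{\balpha}$ on the (convex) feasible set, not from injectivity of $\c\mapsto\sum_{\balpha}c_{\balpha}\g^{\balpha}$.
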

\begin{proof}
We will show that $\c^*$ and $\phi^*$ satisfy the KKT-optimality conditions associated with $\P$, and as Slater condition holds for the convex optimization problem $\P$, it implies that $\c^*$ is an optimal solution of $\P$. Uniqueness follows from the fact that the objective function $\c\mapsto \sum_{\balpha}\log{c_{\balpha}}$ is strictly concave.
 
If $\c^*>0$ is an optimal solution, the KKT-optimality conditions state that
\begin{eqnarray}
\nonumber
1&=&\frac{1}{\hat{s}(n)}\,\sum_{\balpha\in\N^{d+1}_n}\c^*_{\balpha}\:\g(\x)^{\balpha}\,,\quad\forall\x\in\R^d\\
\label{aux}
1/c^*_{\balpha}&=&\phi(\g^{\balpha})\,,\quad\forall\balpha\in\N^{d+1}_n\,,\end{eqnarray}
for some element $\bphi\in\R[\x]^*_n$ such that $\phi(\g^{\balpha})\geq0$ for all $\balpha\in\N^{d+1}_n$. Conversely
under Slater condition, if \eqref{aux} holds then $\c^*$ is an optimal solution of $\P$.
So let $\phi^*$ be the probability measure uniformly supported on the simplex $S$ (i.e. Lebesgue measure on $S$, scaled to a probability measure). 

$\bullet$ With $n=1$ and invoking Proposition \ref{prop-simplex}, one obtains
\begin{eqnarray*}
1+\frac{(1-\sum_{i=1}^dx_i)}{\phi^*(1-\sum_{i=1}^dx_i)}
+\sum_{j=1}^d \frac{x_i}{\phi^*(x_i)}&=&
1+\frac{(1-\sum_{i=1}^dx_i)}{1/(d+1)}+\sum_{j=1}^d \frac{x_i}{1/(d+1)}\\
&=&1+d+1\,=\,d+2\,=\,\hat{s}(1)\,,\end{eqnarray*}
which shows that $\P$ has a feasible solution  with $\c>0$ (i.e., Slater condition holds for $P$), 
and \eqref{aux} holds with $\phi=\phi^*$, the desired result.

$\bullet$ Similarly, with $n=2$,
\[1+\frac{(1-\sum_{i=1}^dx_i)}{\phi^*(1-\sum_{i=1}^dx_i)}
+\sum_{j=1}^d \frac{x_i}{\phi^*(x_i)}
+\frac{(1-\sum_{i=1}^dx_i)^2}{\phi^*((1-\sum_{i=1}^dx_i)^2)}
+\sum_{j=1}^d \frac{x_i^2}{\phi^*(x_i^2)}\]
\[+\sum_{i<j}\frac{x_i\,x_j}{\phi^*(x_ix_j)}
+\sum_{i}\frac{x_i\,(1-\sum_jx_j)}{\phi^*(x_i\,(1-\sum_jx_j))}\]
\[=d+2+\frac{(d+2)(d+1)}{2}(1-\sum_{i=1}^dx_i)^2
+\frac{(d+1)(d+2)}{2}\sum_{j=1}^d x_i^2\]
\[+(d+1)(d+2)\sum_{i<j}x_i\,x_j +(d+1)(d+2)\,\sum_{i}x_i\,(1-\sum_jx_j)\]
\[=\,d+2+\frac{(d+2)(d+1)}{2}\,=\,\frac{(d+3)(d+2)}{2}\,=\,\hat{s}(2)\,.\]
\end{proof}
So again, and exactly as for the interval $[0,1]$,  \eqref{simplex-1-2} provides the $d$-dimensional simplex 
$S$ with a polynomial partition of unity (of degree $n=1$ et $n=2$)
simply expressed in terms of the generators $\{\g^{\balpha}\}$ of the cone $\mathscr{C}_n$, 
scaled by $1/\phi^*(\g^{\balpha})$, where $\phi^*$ is the 
Lebesgue measure on $S$ (scaled to a probability measure).

\section{Conclusion}
We have shown that Chebyshev polynomials and Bernstein polynomials 
are strongly related to respectively the equilibrium measure of $S=[-1,1]$
and the Lebesgue measure on $S=[0,1]$. Both  
provide a specific partition of unity interpreted in terms of Putinar's certificate of positivity
for the former and Handelman's certificate of positivity for the latter, applied to the constant polynomial $``1"$.
In both cases the resulting specific partition of unity maximizes an entropy criterion
over all possible certificates of positivity for $``1"$. We have partially extended this result (and comparison) to the $d$-dimensional canonical  simplex for degrees $n=1,2$, and extension to higher degrees remains to be proved. Finally, extension to arbitrary convex polytopes in also a topic of further investigation.

\end{document}